%
\documentclass[11pt,reqno]{amsart}
\usepackage{geometry}
\setlength{\topskip}{\ht\strutbox} 
\geometry{paper=a4paper,left=25mm,right=30mm,top=40mm, bottom=40mm}

\usepackage{color}
\usepackage{amssymb}
\usepackage{amsmath}
\usepackage{arydshln}

\def\BibTeX{{\rm B\kern-.05em{\sc i\kern-.025em b}\kern-.08em
    T\kern-.1667em\lower.7ex\hbox{E}\kern-.125emX}}

\hfuzz1pc 

\newtheorem{thm}{Theorem}[section]

\newtheorem{lem}[thm]{Lemma}
\newtheorem{prop}[thm]{Proposition}

\newtheorem*{claim*}{Claim}

\theoremstyle{definition}
\newtheorem{defn}{Definition}[section]


\numberwithin{equation}{section}



\usepackage{hyperref}
\usepackage{bbm}
\usepackage{mathrsfs}

\usepackage{pdflscape}

\begin{document}
\title[Regularity of transition densities and ergodicity for affine jump-diffusions]{Regularity of transition densities and ergodicity for affine jump-diffusion processes}

\author{Martin Friesen}
\address[Martin Friesen]{School of Mathematics and Natural Sciences\\ University of Wuppertal\\ 42119 Wuppertal, Germany}
\email{friesen@math.uni-wuppertal.de}

\author[Peng Jin]{Peng Jin}
\address[Peng Jin]{Division of Science and Technology\\ BNU-HKBU United International College\\ Zhuhai, China}
\email{pengjin@uic.edu.cn}

\author{Jonas Kremer}
\address[Jonas Kremer]{School of Mathematics and Natural Sciences\\ University of Wuppertal\\ 42119 Wuppertal, Germany}
\email{kremer@math.uni-wuppertal.de}

\author{Barbara R\"udiger}
\address[Barbara R\"udiger]{School of Mathematics and Natural Sciences\\ University of Wuppertal\\ 42119 Wuppertal, Germany}
\email{ruediger@uni-wuppertal.de}

\date{\today}

\subjclass[2010]{Primary 60J25, 37A25; Secondary 60G10, 60J75}

\keywords{affine processes, transition density, strong Feller property, exponential ergodicity, total variation norm}

\begin{abstract}
In this paper we study the transition density and exponential ergodicity in total variation for an affine process on the canonical state space $\mathbb{R}_{\geq0}^{m}\times\mathbb{R}^{n}$.
Under a H\"ormander-type condition for diffusion components as well as a boundary non-attainment assumption, we derive the existence and regularity of the transition density for the affine process and then prove the strong Feller property.
Moreover, we also show that under these and the additional subcritical conditions the corresponding affine process on the canonical state space is exponentially ergodic in the total variation distance.
To prove existence and regularity of the transition density we derive some precise estimates for the real part of the characteristic function of the process.
Our ergodicity result is a consequence of a suitable
application of a Harris-type theorem based on a local Dobrushin condition combined with the regularity of the transition densities.
\end{abstract}

\maketitle
\allowdisplaybreaks

\section{Introduction and main results}
\subsection{Affine processes}

The general notion of affine processes was first introduced by Duffie,
Filipovi{\'{c}}, and Schachermayer \cite{MR1994043} (2003) and
it provides a unified treatment of Ornstein-Uhlenbeck type processes
on $\mathbb{R}^{n}$ and CBI (continuous-state branching processes
with immigration) processes on $\mathbb{R}_{\geq0}^{m}$. Such processes
have been widely used in mathematical finance. In the following we
will recall the framework of affine processes on the canonical state
space, mainly following \cite{MR1994043}. Denote by $D:=\mathbb{R}_{\geq0}^{m}\times\mathbb{R}^{n}$
the \emph{canonical state space}, where $m,\thinspace n\in\mathbb{N}_{0}$
with $m+n>0$. For $D$, we write $I=\{1,\ldots,m\}$ and $J=\{m+1,\ldots,m+n\}$
for the index sets of the $\mathbb{R}_{\geq0}^{m}$-valued components
and the $\mathbb{R}^{n}$-valued components, respectively. For $x\in D$,
let $x_{I}=(x_{i})_{i\in I}$ and $x_{J}=(x_{j})_{j\in J}$. Throughout
this paper, we use the notation
\begin{align}
A=\begin{pmatrix}A_{II} & A_{IJ}\\
A_{JI} & A_{JJ}
\end{pmatrix}\label{eq: decomposition matrix}
\end{align}
for a $d\times d$-matrix $A$, where $A_{II}=(a_{ij})_{i,j\in I}$,
$A_{IJ}=(a_{ij})_{i\in I,j\in J}$, $A_{JI}=(a_{ij})_{i\in J,j\in I}$,
and $A_{JJ}=(a_{ij})_{i,j\in J}$. We endow $D$ with the usual inner
product $\langle\cdot,\cdot\rangle$ and denote by $\Vert x\Vert$
the induced Euclidean norm of a vector $x\in D$. Finally, let $\mathbb{S}_{d}^{+}$
stand for the cone of symmetric positive semidefinite $d\times d$-matrices.

\begin{defn}\label{def:admissible parameters} We call $(a,\alpha,b,\beta,m,\mu)$
a \emph{set of admissible parameters} for the state space $D$ if

(i) $a\in\mathbb{S}_{d}^{+}$ and $a_{kl}=0$ for $k\in I$ or $l\in I$;

(ii) $\alpha=(\alpha_{1},\ldots,\alpha_{m})$ with $\alpha_{i}=(\alpha_{i,kl})_{1\leq k,l\leq d}\in\mathbb{S}_{d}^{+}$
and $\alpha_{i,kl}=0$ if $k\in I\backslash\{i\}$ or $l\in I\backslash\{i\}$;

(iii) $\nu$ is a Borel measure on $D\backslash\{0\}$ satisfying

\[
\int_{D\backslash\{0\}}\left(1\wedge\left\Vert \xi\right\Vert ^{2}+\sum_{i\in I}\left(1\wedge\xi_{i}\right)\right)\nu(\mathrm{d}\xi)<\infty;
\]

(iv) $\mu=(\mu_{1},\ldots,\mu_{m})$ where every $\mu_{i}$ is a Borel
measure on $D\backslash\{0\}$ satisfying
\begin{equation}
\int_{D\backslash\{0\}}\left(\left\Vert \xi\right\Vert \wedge\left\Vert \xi\right\Vert ^{2}+\sum_{k\in I\backslash\{i\}}\xi_{k}\right)\mu_{i}\left(\mathrm{d}\xi\right)<\infty;\label{cond: conservative}
\end{equation}

(v) $b\in D$;

(vi) $\beta\in\mathbb{R}^{d\times d}$ with $\beta_{ki}-\int_{D\backslash\{0\}}\xi_{k}\mu_{i}(\mathrm{d}\xi)\geq0$
for all $i\in I$ and $k\in I\setminus\{i\}$, and $\beta_{IJ}=0$.
\end{defn}

In contrast to the definition of admissible parameters of Duffie \textit{et
al.} \cite{MR1994043}, here we neglect parameters corresponding to
killing and our definition includes an additional first moment condition
on the jump measures $\mu_{i}$. Let
\[
\mathcal{U}=\mathbb{C}_{\leq0}^{m}\times\mathbb{R}^{n}=\left\lbrace u=(u_{I},u_{J})\in\mathbb{C}^{m}\times\mathbb{C}^{n}\thinspace:\thinspace\mathrm{Re}(u_{I})\leq0,\thinspace\mathrm{Re}(u_{J})=0\right\rbrace .
\]
Note that the function $D\ni x\mapsto\exp(\langle u,x\rangle)$ is
bounded for any $u\in\mathcal{U}$. We denote the Banach space of
continuous functions that vanish at infinity by $C_{0}(D)$. Moreover,
$C_{c}^{2}(D)$ stands for the function space of twice continuously
differentiable functions on $D$ with compact support and $C_{c}^{\infty}(D)$
for the space of smooth functions on $D$ with compact support, respectively.

\begin{thm}[\cite{MR1994043}]\label{thm:characterization of affine processes}
Let $(a,\alpha,b,\beta,m,\mu)$ be a set of admissible parameters.
Then there exists a unique conservative Feller transition semigroup
$(P_{t})_{t\geq0}$ acting on $C_{0}(D)$ such that its infinitesimal
generator $(\mathcal{A},\mathrm{dom}\thinspace\mathcal{A})$ satisfies
$C_{c}^{2}(D)\subset\mathrm{dom}\thinspace\mathcal{A}$ and, for $f\in C_{c}^{2}(D)$
and $x\in D$,
\begin{align*}
\mathcal{A}f(x) & =\langle b+\beta x,\nabla f(x)\rangle+\sum_{k,l=1}^{d}\left(a_{kl}+\sum_{i=1}^{m}\alpha_{i,kl}x_{i}\right)\frac{\partial^{2}f(x)}{\partial x_{k}\partial x_{l}}\\
 & \quad+\int_{D\backslash\lbrace0\rbrace}\left(f(x+\xi)-f(x)-\langle\xi,\nabla_{J}f(x)\rangle\mathbbm{1}_{\lbrace\Vert\xi\Vert\leq1\rbrace}\right)\nu(\mathrm{d}\xi)\\
 & \quad+\sum_{i=1}^{m}x_{i}\int_{D\backslash\lbrace0\rbrace}\left(f(x+\xi)-f(x)-\langle\xi,\nabla f(x)\rangle\right)\mu_{i}(\mathrm{d}\xi),
\end{align*}
where $\nabla_{J}=(\partial/(\partial_{j}x_{j}))_{j\in J}$. Moreover,
$C_{c}^{\infty}(D)$ is a core for the generator and the Fourier transform
of the transition semigroup $(P_{t})_{t\geq0}$ has the representation
\begin{equation}
\int_{D}\mathrm{e}^{\langle u,\xi\rangle}P_{t}(x,\mathrm{d}\xi)=\exp\left(\phi(t,u)+\langle x,\psi(t,u)\rangle\right),\quad t\geq0,\thinspace u\in\mathcal{U},\label{eq:affine property}
\end{equation}
where $\phi(t,u)$ and $\psi(t,u)=(\psi_{I}(t,u),\psi_{J}(t,u))$
solve the generalized Riccati differential equations, for each $u=(u_{I},u_{J})\in\mathcal{U}$,
\begin{align}
\begin{cases}
\partial_{t}\phi(t,u) & =F(\psi(t,u)),\quad\phi(0,u)=0,\\
\partial_{t}\psi_{I}(t,u) & =R\left(\psi\left(t,u\right)\right),\quad\psi_{I}\left(0,u\right)=u_{I}\\
\psi_{J}(t,u) & =\mathrm{e}^{\beta_{JJ}^{\top}t}u_{J}
\end{cases}\label{eq:riccati eq for psi}
\end{align}
and the function $R$ and $F$ are given by
\begin{align*}
F(u) & =\langle u,au\rangle+\langle b,u\rangle+\int_{D\backslash\{0\}}\left(\mathrm{e}^{\langle u,\xi\rangle}-1-\langle u_{J},\xi_{J}\rangle\mathbbm{1}_{\lbrace\Vert\xi\Vert\leq 1\rbrace}\left(\xi\right)\right)\nu\left(\mathrm{d}\xi\right),\\
R_{i}(u) & =\langle u,\alpha_{i}u\rangle+\sum_{k=1}^{d}\beta_{ki}u_{k}+\int_{D\backslash\{0\}}\left(\mathrm{e}^{\langle u,\xi\rangle}-1-\langle u,\xi\rangle\right)\mu_{i}\left(\mathrm{d}\xi\right),\quad i\in I.
\end{align*}
\end{thm}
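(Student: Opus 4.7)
The plan is to construct the semigroup from its characteristic function via the Riccati system and then identify its generator. The first step is to analyse the generalised Riccati ODEs \eqref{eq:riccati eq for psi}. The vector fields $F$ and $R_i$ are well defined on $\mathcal{U}$ thanks to the integrability conditions in (iii)--(iv), and they are locally Lipschitz there. The key structural point is that $\mathcal{U}$ is forward invariant under the $\psi_I$-flow: on the boundary $\{\mathrm{Re}\,u_i=0\}$, the admissibility conditions in (ii), (iv) and (vi) (off-diagonal positivity of $\beta_{II}$ and positivity of the jump component in the $x_k$-direction) imply $\mathrm{Re}\,R_i(u)\le 0$ whenever $\mathrm{Re}\,u_I\le 0$ and $\mathrm{Re}\,u_J=0$, so the flow cannot leave $\mathcal{U}$. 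Combined with the explicit formula for $\psi_J$ and local existence, this yields a global flow on $[0,\infty)\times\mathcal{U}$, and $\phi(t,u)=\int_0^t F(\psi(s,u))\,\mathrm{d}s$ is obtained by quadrature.

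Next, I would define
\[
\widehat{P_t(x,\cdot)}(u):=\exp\bigl(\phi(t,u)+\langle x,\psi(t,u)\rangle\bigr),\quad u\in\mathcal{U},
\]
and verify that it is the characteristic function of a sub-probability measure on $D$. One route is an approximation scheme: truncate the large-jump parts of $\nu$ and $\mu_i$ so that the resulting generator corresponds to a Markov process built by direct stochastic means (a superposition of Ornstein--Uhlenbeck and CBI components, which fit into a known existence framework), and pass to the limit using L\'evy's continuity theorem. Conservativeness, i.e.\ the fact that the limit is a probability measure rather than a sub-probability measure, is exactly where the first-moment condition \eqref{cond: conservative} in (iv) is used: it prevents the explosion of $\phi(t,0)$ and hence rules out loss of mass. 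The semigroup identity $P_{t+s}=P_tP_s$ is then a direct consequence of the flow property $\psi(t+s,u)=\psi(t,\psi(s,u))$ and the cocycle identity for $\phi$, both of which follow from uniqueness of solutions to \eqref{eq:riccati eq for psi}.

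For the Feller property one checks that $x\mapsto\exp(\phi(t,u)+\langle x,\psi(t,u)\rangle)$ is continuous in $x$ and decays at infinity for $u$ in the interior of $\mathcal{U}$; together with density of the linear span of $\{x\mapsto\mathrm{e}^{\langle u,x\rangle}:u\in\mathcal{U}\}$ in $C_0(D)$ this upgrades to $P_t\colon C_0(D)\to C_0(D)$, and strong continuity at zero follows from the initial conditions $\phi(0,u)=0$, $\psi(0,u)=u$ together with dominated convergence. To identify the generator on $e_u(x):=\mathrm{e}^{\langle u,x\rangle}$ I would differentiate \eqref{eq:affine property} at $t=0$ to obtain
\[
\mathcal{A}e_u(x)=\bigl(F(u)+\langle R(u),x_I\rangle+\langle\beta_{JJ}^{\top}u_J,x_J\rangle\bigr)e_u(x),
\]
which is exactly the stated integro-differential formula evaluated at $f=e_u$. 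Integrating this identity against smooth test densities in $u$ transfers it to arbitrary $f\in C_c^2(D)$. The core property is obtained by applying Hille--Yosida: one shows that $(\lambda-\mathcal{A})(C_c^\infty(D))$ is dense in $C_0(D)$ for some (hence every) $\lambda>0$, using the fact that $C_c^\infty(D)$ separates points and is invariant under a standard mollification of the resolvent.

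The main obstacle in this programme is the second step, namely showing that the candidate $\exp(\phi+\langle x,\psi\rangle)$ is genuinely the Fourier transform of a probability kernel. This requires both the forward invariance of $\mathcal{U}$ (which encodes the admissibility conditions on $a,\alpha,\beta,\mu$ in a subtle way) and an honest existence construction, for which the truncation-and-limit procedure seems unavoidable and is where most of the technical work lies. Once the existence, conservativeness, and semigroup property have been established, the Feller property, the explicit generator formula on $C_c^2(D)$, and the core property are comparatively routine.
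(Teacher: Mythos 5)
The paper does not prove this theorem at all: it is stated with the citation \cite{MR1994043} (Duffie--Filipovi\'c--Schachermayer), and the only accompanying remarks in the paper are (i) that conservativeness follows from \cite[Lemma 9.2]{MR1994043} together with the first-moment condition on $\mu_i$ and the absence of a killing term, and (ii) that the generator is written with a different compensation convention than \cite[Theorem 2.7, (2.12)]{MR1994043}, compensated by an adjustment of $b$ and $\beta$. So there is no ``paper's own proof'' to compare against; what you have written is an outline of how the cited result is established in \cite{MR1994043}.

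As a sketch of the DFS programme your outline is broadly on track, but two of your transitions are a fair bit rougher than you make them sound. First, the forward invariance of $\mathcal{U}$ under the $\psi_I$-flow is not just ``$\mathrm{Re}\,R_i\le 0$ on the boundary by inspection'': at a boundary point $\mathrm{Re}\,u_i=0$ the real part of the $\mu_i$-integral can by itself be positive, and the correct sign only emerges after combining the admissibility inequality $\beta_{ki}\geq\int\xi_k\,\mu_i(\mathrm{d}\xi)$ (for $k\in I\setminus\{i\}$) with the linear compensator inside the $\mu_i$-integral, so that the sum collapses to $\int\bigl(\mathrm{e}^{\langle v,\xi\rangle}\cos\langle w,\xi\rangle-1\bigr)\mu_i(\mathrm{d}\xi)\le 0$. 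You gesture at the right hypotheses but should make this cancellation explicit, since it is the whole point of the admissibility constraints on $\beta$ and $\mu_i$. Second, the heaviest technical work in \cite{MR1994043} is exactly the step you flag as ``most of the technical work'': establishing that $\exp(\phi+\langle x,\psi\rangle)$ is a genuine characteristic function. That is done there by an approximation with processes having compound-Poisson jump parts and taking limits, and also by proving a priori ``regularity'' (differentiability of $\phi,\psi$ at $t=0$) rather than simply positing local Lipschitzness of $F,R$ on all of $\mathcal{U}$ including its boundary. Your outline is consistent with this, but as written it under-acknowledges that local Lipschitz continuity of $R_i$ up to the boundary of $\mathcal{U}$ is itself one of the things being proved, not assumed.
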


In view of \cite[Lemma 9.2]{MR1994043}, by the first moment condition
on $\mu_{i}$ given in Definition \ref{def:admissible parameters}
and the absence of the parameters according to killing, the transition
semigroup $(P_{t})_{t\geq0}$ with admissible parameters $(a,\alpha,b,\beta,\nu,\mu)$
is indeed conservative. Note also that in Theorem \ref{thm:characterization of affine processes}
the form of $\mathcal{A}f$ looks slightly different compared with
the one given in \cite[Theorem 2.7, formula (2.12)]{MR1994043}. In
fact, we used different compensation in the integrand of the integral
with respect to $\nu$ and $\mu_{i}$. However, we may modify the
drift parameters $b$ and $\beta$ accordingly so that the presentation
provided here is actually equivalent to that of \cite{MR1994043}.

\begin{defn} A conservative Markov process with state space $D$
and with transition semigroup $(P_{t})_{t\geq0}$ given in Theorem
\ref{thm:characterization of affine processes} is called an \emph{affine
process} with admissible parameters $(a,\alpha,b,\beta,\nu,\mu)$.
\end{defn}

We refer to \cite{MR1994043,2019arXiv190105815F,MR3313754} for extensive
surveys on the developments of affine processes. The purpose of this
work is twofold: on the one hand, we investigate sufficient conditions
for the regularity of the transition density including the strong
Feller property. On the other hand, based on the latter results, we
show that $P_{t}(x,\cdot)$ converges in total variation exponentially
fast to its unique invariant measure.

\subsection{Existence and regularity of transition densities}

For given $n\times n$-matrices $A_{1},\dots,A_{n}$ we write $[A_{1},\ldots,A_{n}]$
for the $n\times n^{2}$-block matrix which is obtained by putting
the matrices next to each other. Let us then introduce the $n\times n^{2}$-matrix
$\mathcal{K}$ given by
\begin{equation}
\mathcal{K}=\left[a_{JJ},\beta_{JJ}a_{JJ},\dots,\beta_{JJ}^{n-1}a_{JJ}\right],\label{eq:density condition}
\end{equation}
where we have used the notation \eqref{eq: decomposition matrix}.
For a given nonnegative integer $p$, let $C_{0}^{p}(D)$ be the subspace
of $C_{0}(D)$ of all $p$-times continuously differentiable functions
whose derivatives up to order $p$ all belong to $C_{0}(D)$. For
given multi-indices $\mathbf{q}=(q_{1},\dots,q_{d}),\ \tilde{\mathbf{q}}=(\tilde{q}_{1},\dots,\tilde{q}_{d})\in\mathbb{N}_{0}^{d}$
we denote by
\[
\partial_{(x,y)}^{(\mathbf{q},\tilde{\mathbf{q}})}=\frac{\partial^{|\mathbf{q}|+|\tilde{\mathbf{q}}|}}{\partial x_{1}^{q_{1}}\dots\partial x_{d}^{q_{d}}\partial y_{1}^{\tilde{q}_{1}}\dots\partial y_{d}^{\tilde{q}_{d}}}
\]
the corresponding mixed partial derivatives of orders $|\mathbf{q}|=q_{1}+\dots q_{d}$
and $|\tilde{\mathbf{q}}|=\tilde{q}_{1}+\dots+\tilde{q}_{d}$ acting
on functions in the variables $(x,y)\in D\times D$. The following
is our main result on the existence and regularity of the transition
densities.

\begin{thm}\label{thm:existence of densities} Assume that $\mathcal{K}$
given in \eqref{eq:density condition} has full rank, that $\min_{i\in\{1,\dots,m\}}\alpha_{i,ii}>0$,
and there exists a nonnegative integer $p$ with
\begin{align}
p<\min_{i\in I}\frac{b_{i}}{\alpha_{i,ii}}-m.\label{eq: condition boundary}
\end{align}
Then the following assertions hold:
\begin{enumerate}
\item[(a)] For each $x\in D$ and $t>0$, $P_{t}(x,\cdot)$ admits a density
$f_{t}(x,\cdot)$ of class $C_{0}^{p}(D)$ given by
\begin{equation}
f_{t}(x,y)=\int_{\mathbb{R}^{d}}\mathrm{e}^{-\langle y,\mathrm{i}u\rangle}\mathrm{e}^{\phi(t,\mathrm{i}u)+\langle x,\psi(t,\mathrm{i}u)\rangle}\frac{\mathrm{d}u}{(2\pi)^{d}},\qquad y\in D.\label{eq:densitiy of p_t}
\end{equation}
The function $(0,\infty)\times D\times D\ni(t,x,y)\mapsto f_{t}(x,y)$
defined by \eqref{eq:densitiy of p_t} is jointly continuous and,
for each $t>0$, the mapping $x\mapsto f_{t}(x,\cdot)\in L^{1}(D)$
is continuous.
\item[(b)] For each pair of multi-indices $\mathbf{q},\tilde{\mathbf{q}}\in\mathbb{N}_{0}^{d}$
satisfying
\begin{align}
q_{1}=\dots=q_{m}=0\ \text{ and }\ \sum_{i=1}^{m}\tilde{q}_{i}\leq p,\label{eq: multi index b}
\end{align}
the derivative $\partial_{(x,y)}^{(\mathbf{q},\tilde{\mathbf{q}})}f_{t}$
exists and is jointly continuous in $(t,x,y)\in(0,\infty)\times D\times D$.
Moreover, it holds that for all $t_{1}>t_{0}>0$
\[
\sup_{(t,x,y)\in[t_{0},t_{1}]\times D\times D}\left|\partial_{(x,y)}^{(\mathbf{q},\tilde{\mathbf{q}})}f_{t}(x,y)\right|<\infty.
\]
\item[(c)] Assume that
\begin{equation}
\int_{\left\{ 0<\left\Vert \xi\right\Vert \le1\right\} }\left\Vert \xi\right\Vert \mu_{i}\left(\mathrm{d}\xi\right)<\infty,\quad i=1,\ldots,m.\label{eq: addtion condtion on mu_i}
\end{equation}
Then for each pair of multi-indices $\mathbf{q},\tilde{\mathbf{q}}\in\mathbb{N}_{0}^{d}$
satisfying
\begin{align}
\sum_{i=1}^{m}(q_{i}+\tilde{q}_{i})\leq p,\label{eq: multi index c}
\end{align}
the derivative $\partial_{(x,y)}^{(\mathbf{q},\tilde{\mathbf{q}})}f_{t}$
exists for $x\in D^{\mathrm{o}}$, $y\in D$, $t>0$ and is jointly
continuous in $(t,x,y)\in(0,\infty)\times D^{\mathrm{o}}\times D$,
where $D^{\mathrm{o}}$ denotes the interior of $D$. Moreover, it
holds that for any $t_{1}>t_{0}>0$ and any compact set $K\subset D^{\mathrm{o}}$,
\[
\sup_{(t,x,y)\in[t_{0},t_{1}]\times K\times D}\left|\partial_{(x,y)}^{(\mathbf{q},\tilde{\mathbf{q}})}f_{t}(x,y)\right|<\infty.
\]
\end{enumerate}
\end{thm}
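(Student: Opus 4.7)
The strategy is to obtain the density as the Fourier inverse of the characteristic function
\[
u\longmapsto \mathrm{e}^{\phi(t,\mathrm{i}u)+\langle x,\psi(t,\mathrm{i}u)\rangle},
\]
so the whole argument reduces to proving sharp decay estimates for this function on $\mathbb{R}^d$ and then justifying differentiation under the integral sign in $y$ (for part (b)) and in $x$ (for part (c)). I would first record that $\psi_J(t,\mathrm{i}u)=\mathrm{e}^{\beta_{JJ}^\top t}\mathrm{i}u_J$ is purely imaginary and that $\mathrm{Re}\,\psi_I(s,\mathrm{i}u)\le 0$ throughout, since $\mathcal{U}$ is forward invariant for the Riccati system \eqref{eq:riccati eq for psi}. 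These two facts are the starting point for all estimates.

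The heart of the proof is the analysis of the real part of the characteristic exponent. The Gaussian contribution to $\mathrm{Re}\,\phi(t,\mathrm{i}u)$ coming from $\langle\psi,a\psi\rangle=\langle\psi_J,a_{JJ}\psi_J\rangle$ equals
\[
-\int_0^t\langle u_J,\mathrm{e}^{\beta_{JJ}s}a_{JJ}\mathrm{e}^{\beta_{JJ}^\top s}u_J\rangle\,\mathrm{d}s,
\]
and the full-rank assumption on $\mathcal{K}$ is precisely the Kalman controllability condition ensuring that this quadratic form is bounded above by $-c(t)\|u_J\|^2$ for every $t>0$, giving Gaussian decay in the $u_J$-directions. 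For the $u_I$-directions I would study the scalar components of the Riccati equation for $\psi_I$: the strict positivity $\alpha_{i,ii}>0$ makes the quadratic term $\alpha_{i,ii}\psi_i^2$ dominate $R_i$ for large $|\psi_i|$, so a one-dimensional comparison argument should give an upper bound of the form $\mathrm{Re}\,\psi_i(s,\mathrm{i}u)\lesssim -\rho_i(s,u)$ with $\rho_i$ saturating at order $1/s$ as $|u|\to\infty$. Integrating the resulting drift contribution $\langle b_I,\psi_I\rangle$ in $\phi$ then yields a decay of logarithmic type in $u_I$, so that altogether
\[
\bigl|\mathrm{e}^{\phi(t,\mathrm{i}u)+\langle x,\psi(t,\mathrm{i}u)\rangle}\bigr|\le C(t,x)\,\mathrm{e}^{-c(t)\|u_J\|^2}\,(1+\|u_I\|)^{-\sigma}
\]
for some $\sigma$ proportional to $\min_i b_i/\alpha_{i,ii}$, uniformly in $x\in D$ and $t$ in compact subintervals of $(0,\infty)$.

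Part (a) then follows by direct Fourier inversion: the boundary non-attainment condition $p<\min_i b_i/\alpha_{i,ii}-m$ (with $p=0$) ensures that the above bound is integrable on $\mathbb{R}^d$, so \eqref{eq:densitiy of p_t} defines a continuous density of $P_t(x,\cdot)$ (uniqueness being automatic from uniqueness of Fourier transforms), and joint continuity in $(t,x,y)$ together with continuity of $x\mapsto f_t(x,\cdot)\in L^1(D)$ follows from dominated convergence. For part (b), differentiating under the integral in $y$ brings down a factor $(-\mathrm{i}u)^{\mathbf{q}+\tilde{\mathbf{q}}}$; the restrictions that $\mathbf{q}$ has no $I$-entries and $\sum_{i\in I}\tilde q_i\le p$ mean that we pick up at most $|u_I|^p$, which is integrable against the above decay exactly by the assumption $p<\min_i b_i/\alpha_{i,ii}-m$. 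For part (c) each $\partial_{x_i}$ with $i\in I$ pulls down an extra factor $\psi_i(t,\mathrm{i}u)$; under the additional moment condition \eqref{eq: addtion condtion on mu_i} one verifies the polynomial growth $|\psi_i(t,\mathrm{i}u)|\lesssim 1+\|u\|$, and when $x\in D^{\mathrm{o}}$ the extra exponential factor $\mathrm{e}^{\langle x_I,\mathrm{Re}\,\psi_I\rangle}$ supplies just enough additional decay in $u_I$, uniformly on compacta of $D^{\mathrm{o}}$, to absorb the resulting polynomial factors.

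The principal obstacle is the detailed analysis of the Riccati equation for $\psi_I$ in the presence of the jump term $\int(\mathrm{e}^{\langle\psi,\xi\rangle}-1-\langle\psi,\xi\rangle)\mu_i(\mathrm{d}\xi)$: one must show that this contribution does not overwhelm the diffusive damping $\alpha_{i,ii}\psi_i^2$, so that the $1/s$-saturation of $\mathrm{Re}\,\psi_i$ survives, and one must obtain the polynomial bound $|\psi_i(t,\mathrm{i}u)|\lesssim 1+\|u\|$ needed for part (c) under the \emph{additional} integrability \eqref{eq: addtion condtion on mu_i}. A secondary, but technically delicate, point is to make the dependence of the constants in the key estimate sufficiently explicit and uniform (in $t$ on $[t_0,t_1]$, in $x$ on $D$ respectively on compacta of $D^{\mathrm{o}}$) to justify both dominated convergence and differentiation under the integral, thereby yielding the joint continuity statements in (a)--(c).
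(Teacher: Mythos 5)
Your proposal is correct and follows essentially the same route as the paper: Fourier inversion of $u\mapsto \mathrm{e}^{\phi(t,\mathrm{i}u)+\langle x,\psi(t,\mathrm{i}u)\rangle}$ combined with a two-sided decay estimate, namely Gaussian decay in $u_J$ from the Kalman-type full-rank condition on $\mathcal{K}$ and polynomial decay $(1+\|u_I\|)^{-\sigma}$ from a rescaled Riccati comparison driven by $\alpha_{i,ii}>0$, with the jump term only worsening the effective coefficient $\alpha_{i,ii}$ to a nearby $\widehat\alpha_{i,ii}(\vartheta)$, so that $\sigma$ can be taken arbitrarily close to $\min_i b_i/\alpha_{i,ii}$ and the strict inequality $p<\min_i b_i/\alpha_{i,ii}-m$ saves the day. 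The technicalities you flag as the ``principal obstacle'' (taming the jump contribution to the Riccati equation, and getting the polynomial bound on the imaginary part of $\psi_I$ under \eqref{eq: addtion condtion on mu_i} for part (c)) are exactly the ones the paper handles, via a cone decomposition of $\mathbb{R}^d$ and an explicit ODE comparison, plus a real/imaginary split of $\psi_I$ together with the boundedness of $(1+|r|)^p\mathrm{e}^{\varepsilon r}$ on $(-\infty,0]$ in place of your uniform bound $|\psi_i|\lesssim 1+\|u\|$.
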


Note that the continuity of $x\mapsto f_{t}(x,\cdot)\in L^{1}(D)$
stated in part (a) implies that the corresponding affine process has
the strong Feller property. The restrictions on the multi-indies $(\mathbf{q},\tilde{\mathbf{q}})$
imposed in part (b) assert that $(x,y)\mapsto f_{t}(x,y)$ is, for
$t>0$, smooth in $(x_{J},y_{J})$ and $p$-times continuously differentiable
in $y_{I}$. Part (c) asserts that if we assume finite first moment
for the small jumps of the measures $\mu_{i}$ and restrict ourselves
to the interior of $D$, then the transition density is also differentiable
with respect to the $x_{I}$ variables. Note that in all cases differentiability
with respect to the variables $x_{I}$, $y_{I}$ is determined by
the index $p$ given by \eqref{eq: condition boundary}, while the
density is smooth in the variables $x_{J}$, $y_{J}$.

The proof of Theorem \ref{thm:existence of densities} is given in
Section \ref{sec:ex and reg of denisities} and is essentially based
on an estimate on the characteristic function of the affine process
shown in Section 2. As a consequence of the proof, we find that the
derivatives $\partial_{(x,y)}^{(\mathbf{q},\tilde{\mathbf{q}})}f_{t}$
can be obtained from \eqref{eq:densitiy of p_t} by differentiating
under the integral. To derive the desired estimates on the characteristic
function of the affine process, we extend the methods developed in
\cite{MR3084047} and \cite{2019arXiv190805473F} to general affine
processes on the canonical state space $D$. Below we shall comment
on the conditions imposed in the above statements.

First, note that $\min_{i\in\{1,\dots,m\}}\alpha_{i,ii}>0$ implies
that the underlying diffusion component restricted to $\mathbb{R}_{\geq0}^{m}$
is non-degenerate except on the boundary $\partial\mathbb{R}_{\geq0}^{m}$,
which can be seen from the particular structure of the generator $\mathcal{A}$.
Condition \eqref{eq: condition boundary} is a multi-dimensional version
of the Feller condition for diffusions. Such condition guarantees
that the affine process does not charge the boundary of its state
space and that its law has a density being continuous on the whole
$D=\mathbb{R}_{\geq0}^{m}\times\mathbb{R}^{n}$ including the boundary
$\partial\mathbb{R}_{\geq0}^{m}\times\mathbb{R}^{n}$. Finally, note
that without condition \eqref{eq: condition boundary} we already
find that in dimension $d=m=1$ an affine process whose law has a
density on $(0,\infty)$ with $\{0\}$ being a singular point, see,
e.g., the remark given after \cite[Theorem 4.1]{MR3084047}.

Second, the assumption that $\mathcal{K}$ given by \eqref{eq:density condition}
has full rank guarantees that the underlying component on $\mathbb{R}^{n}$
drives the process to each point of the state space. Such condition
is an analogue of the well-known H\"ormander condition for diffusion
operators and is, for instance, satisfied if $a_{JJ}$ is invertible.
However, depending on the particular form of the drift $\beta_{JJ}$
one may also find examples where $a_{JJ}\neq0$ is not invertible.

While the $C_{0}^{p}(D)$ regularity of the density $f_{t}(x,\cdot)$
from statement (a) partially extends a known result as formulated
in \cite[Theorem 4.1]{MR3084047} for affine processes with state-dependent
jumps of finite variation, continuity and differentiability with respect
to the variable $x$ as discussed in statements (b) and (c) have not
been investigated yet in this generality. For affine processes with
state space $D=\mathbb{R}_{\geq0}$ (i.e., one-dimensional CBI processes)
existence and regularity of transition densities, including the strong
Feller property, was studied by analytic techniques in \cite{MR3805842}.
In \cite{MR3343292}, the strong Feller property of one-dimensional
CBI processes was established, provided Grey's condition is satisfied,
based on a precise analysis of the Laplace transform from which the
construction of a successful coupling was derived. Note that Grey's
condition is relatively mild and applies to diffusions as well as
processes with non-trivial jump behavior. Based on a precise convergence
rate for the one-step Euler approximations combined with a discrete
integration by parts formula, existence and Besov regularity of transition
densities were studied for multi-type CBI processes on state space
$D=\mathbb{R}_{\geq0}^{m}$, $m\geq1$, in \cite{FJR18}. Such a result
can be used to provide a simple proof of the strong Feller property
for multi-dimensional settings, see \cite{2019arXiv190805473F} where
the anisotropic stable JCIR process was studied. Certainly, this method
is also applicable to general affine processes through a combination
of the methods used in \cite{FJR18} with the stochastic equation
for affine processes derived in \cite{2019arXiv190105815F}. Since
one would only get a density in a weighted Besov space instead of
$C_{0}^{p}(D)$, the expected results would be less optimal than our
Theorem \ref{thm:existence of densities}. In contrast, this work
is restricted to affine processes for which the underlying diffusion
component is non-degenerate, while the aforementioned works investigate
models for which the diffusion part is allowed to be degenerate or
even absent.

\subsection{Exponential ergodicity in total variation}

We now turn to the investigation of the long-time behavior of affine
processes on $D$. Following \cite[Theorem 2.7]{2018arXiv181205402J},
for an affine transition kernel $P_{t}(x,\mathrm{d}\xi)$ on $D$
with an admissible parameter set $(a,\alpha,b,\beta,m,\mu)$ such
that $\beta$ has only eigenvalues with strictly negative real-parts,
and the state-independent jump-measure $\nu$ satisfies $\int_{\lbrace\Vert\xi\Vert>1\rbrace}\log\Vert\xi\Vert\nu(\mathrm{d}\xi)<\infty$,
there exists a unique invariant distribution\footnote{A probability measure $\pi$ on $D$ is said to be an invariant distribution
if
\[
\int_{D}P_{t}(x,\mathrm{d}\xi)\pi(\mathrm{d}\xi)=\pi(\mathrm{d}\xi),\quad t\geq0,\ \ x\in D.
\]
} $\pi$ and $P_{t}(x,\cdot)\to\pi$ weakly as $t\to\infty$. After
having established the existence of a unique invariant distribution
for affine processes, it is natural to study the convergence rate
towards this limiting distribution in different distances on the space
of probability measures. The particular choice of Wasserstein distances
was recently treated in \cite{2019arXiv190105815F} where an exponential
rate of convergence was established. Below we provide a similar statement
for the total variation distance.

For two measures $\varrho,\thinspace\widetilde{\varrho}$ on $\mathcal{P}(D)$,
the class of probability measures on $D$, the total variation distance
is defined by
\[
\left\Vert \varrho-\widetilde{\varrho}\right\Vert _{TV}:=\sup\left\lbrace \left\vert \varrho(A)-\widetilde{\varrho}(A)\right\vert \thinspace:\thinspace A\subset D\text{ Borel set}\right\rbrace .
\]

Our second main result is built on Theorem \ref{thm:existence of densities}
and yields the existence and regularity of the density for the unique
invariant measure $\pi$ as well as convergence of the transition
kernel towards $\pi$ in total variation.

\begin{thm}\label{thm:exp ergo} Let $P_{t}(x,\cdot)$ be the transition
semigroup of an affine process on $D$ with an admissible parameter
set $(a,\alpha,b,\beta,m,\mu)$. Suppose that $\beta$ has only eigenvalues
with strictly negative real-parts, $\int_{\lbrace\Vert\xi\Vert>1\rbrace}\log\Vert\xi\Vert m(\mathrm{d}\xi)<\infty$,
that $\mathcal{K}$ defined by \eqref{eq:density condition} has full
rank, $\min_{i\in\{1,\dots,m\}}\alpha_{i,ii}>0$, and \eqref{eq: condition boundary}
holds for some nonnegative integer $p$. Then the unique invariant
measure $\pi$ has a density $f^{\pi}$ of class $C_{0}^{p}(D)$.
Moreover, for each multi-index $\mathbf{q}\in\mathbb{N}_{0}^{d}$
satisfying $\sum_{i\in I}q_{i}\leq p$, the derivative $\partial^{|\mathbf{q}|}(f^{\pi})/\partial_{1}^{q_{1}}\dots\partial_{d}^{q_{d}}$
exists and is uniformly bounded. Finally, there exist constants $c,\thinspace C>0$
such that
\[
\Vert P_{t}(x,\cdot)-\pi(\cdot)\Vert_{TV}\leq C\mathrm{e}^{-ct}\left(1+\log\left(1+\Vert x\Vert\right)\right),\quad t\geq0,\thinspace x\in D.
\]
\end{thm}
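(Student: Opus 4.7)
The plan is to split the argument into two stages. First, transfer the regularity of the transition density already established in Theorem~\ref{thm:existence of densities} to the invariant law $\pi$. Second, prove the quantitative total variation bound via a Harris-type theorem whose two ingredients are a logarithmic drift inequality and a local Dobrushin-type minorization on sublevel sets of the Lyapunov function.

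Existence and uniqueness of $\pi$ are furnished by \cite[Theorem~2.7]{2018arXiv181205402J}. To obtain the regularity of $f^{\pi}$, I would exploit the invariance of $\pi$ and the density formula~\eqref{eq:densitiy of p_t}: fixing any $t>0$,
\[
\pi(\mathrm{d}y)=\int_{D}P_{t}(x,\mathrm{d}y)\,\pi(\mathrm{d}x)=\biggl(\int_{D}f_{t}(x,y)\,\pi(\mathrm{d}x)\biggr)\mathrm{d}y,
\]
so that $f^{\pi}(y)=\int_{D}f_{t}(x,y)\,\pi(\mathrm{d}x)$. The uniform bounds on the $y$-derivatives from Theorem~\ref{thm:existence of densities}(b), applied with $\mathbf{q}=0$ and $|\tilde{\mathbf{q}}|\leq p$, together with dominated convergence permit differentiation under the integral; this yields $f^{\pi}\in C^{p}_{0}(D)$ with boundedness of all derivatives up to order $p$ in the $y_{I}$-variables, as claimed.

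For the convergence rate, I would employ the Hairer--Mattingly version of Harris' theorem with the Lyapunov function $V(x)=1+\log(1+\|x\|)$. The geometric drift inequality
\[
P_{t}V(x)\leq\gamma\,V(x)+K,\quad t\geq t_{\ast},
\]
with $\gamma<1$ and $K>0$, should follow from the strictly negative spectral bound on $\beta$ and the logarithmic moment condition on $\nu$, by direct estimation of the characteristic exponents along the lines of the Lyapunov-type bounds developed for the Wasserstein setting in \cite{2019arXiv190105815F}. In parallel one needs a local Dobrushin condition: for each $R>0$, there exist $t_{0}>0$, $\varepsilon>0$, and a finite measure $\nu_{\ast}$ on $D$ such that $P_{t_{0}}(x,A)\geq\varepsilon\,\nu_{\ast}(A)$ for all $x$ in the compact sublevel set $\{V\leq R\}$. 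Setting $g_{R}(y):=\inf_{V(x)\leq R}f_{t_{0}}(x,y)$, the joint continuity of $f_{t_{0}}$ from Theorem~\ref{thm:existence of densities}(a) ensures that $g_{R}$ is measurable and satisfies $f_{t_{0}}(x,\cdot)\geq g_{R}$ pointwise for every $x$ in the sublevel set; one then puts $\nu_{\ast}(\mathrm{d}y)=g_{R}(y)\,\mathrm{d}y$ and needs $\int_{D}g_{R}>0$. Once both conditions are established, Harris' theorem delivers constants $c,C>0$ for which $\|P_{t}(x,\cdot)-\pi\|_{TV}\leq Ce^{-ct}V(x)$, which is precisely the asserted bound.

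The main obstacle is verifying $\int_{D}g_{R}>0$, i.e.\ producing a uniform-in-$x$ lower bound for $f_{t_{0}}(x,\cdot)$ on a common set of positive Lebesgue measure. Continuity and compactness alone only give measurability and the pointwise domination; genuine positivity requires an irreducibility-type argument, namely exhibiting a non-empty open set $U\subset D$ on which $f_{t_{0}}(x,y)\geq\delta>0$ for all $(x,y)\in\{V\leq R\}\times U$. This should be extracted from the full-rank H\"ormander-type condition on $\mathcal{K}$, which forces the $\mathbb{R}^{n}$-component to diffuse throughout $D$, in conjunction with the Fourier representation~\eqref{eq:densitiy of p_t} and uniform control of $\phi(t_{0},\mathrm{i}u)$ and $\psi(t_{0},\mathrm{i}u)$ on $\{V\leq R\}$. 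Once this positivity is secured, the remaining steps are routine verifications of the Hairer--Mattingly hypotheses.
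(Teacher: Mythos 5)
Your derivation of the regularity of $f^{\pi}$ by writing $f^{\pi}(y)=\int_{D}f_{t}(x,y)\,\pi(\mathrm{d}x)$ and differentiating under the integral using the uniform bounds of Theorem~\ref{thm:existence of densities}(b) is a valid alternative to the paper's route, which instead passes to the characteristic function of $\pi$ via $\int_{D}\mathrm{e}^{\langle\mathrm{i}u,\xi\rangle}\pi(\mathrm{d}\xi)=\lim_{t\to\infty}\int_{D}\mathrm{e}^{\langle\mathrm{i}u,\xi\rangle}P_{t}(x,\mathrm{d}\xi)$ and applies Theorem~\ref{thm: estimate characteristic function} directly; both buy essentially the same conclusion.

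The ergodicity half, however, has a genuine gap. The geometric drift inequality $P_{t}V\leq\gamma V+K$ with $\gamma<1$ cannot hold for the logarithmic Lyapunov function $V(x)=1+\log(1+\|x\|)$: since $\beta$ is contracting, $P_{t}V(x)\approx 1+\log(1+\mathrm{e}^{-ct}\|x\|)=V(x)-ct+o(1)$ as $\|x\|\to\infty$, which is only an \emph{additive} drift, so a direct Harris argument on $P_{t}$ with this $V$ cannot yield exponential TV rates. Nor can you simply switch to a polynomial Lyapunov function for $P_{t}$: the state-independent jump measure $\nu$ is assumed to have only a $\log$-moment, so $\int(1+\|y\|)P_{t}(x,\mathrm{d}y)$ may diverge. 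The paper circumvents both obstructions by decomposing $P_{t}(x,\cdot)=Q_{t}(x,\cdot)\ast R_{t}(0,\cdot)$, where $Q_{t}$ carries the parameters $(a,\alpha,b,\beta,\nu=0,\mu)$ (no state-independent jumps, hence first moments exist) and $R_{t}$ carries $(0,\alpha,0,\beta,\nu,\mu)$. For $Q_{t}$ alone, the polynomial Lyapunov function $V_{\varepsilon}(x)=(1+\langle x_{I},x_{I}\rangle_{I}+\varepsilon\langle x_{J},x_{J}\rangle_{J})^{1/2}$ satisfies $\mathcal{A}_{Q}V_{\varepsilon}\leq-cV_{\varepsilon}+C$ (Proposition~\ref{prop:lyapunov function}); combined with a local Dobrushin bound (Proposition~\ref{prop:strong feller property}) and Harris' theorem this gives $\|Q_{t}(x,\cdot)-Q_{t}(y,\cdot)\|_{TV}\leq c\min\{1,\mathrm{e}^{-ct}(1+\|x\|+\|y\|)\}$. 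Because convolution is a TV contraction, this bound transfers unchanged to $P_{t}$. One then couples $\delta_{x}$ with $\pi$ and uses $\min\{1,a\}\lesssim\log(1+a)$, the inequality $\log(1+c_{2}c_{3})\lesssim\min\{c_{2},\log(1+c_{3})\}+\log(1+c_{3})$, and the finiteness of $\int_{D}\log(1+\|y\|)\pi(\mathrm{d}y)$ (this is exactly where the $\log$-moment condition on $\nu$ enters) to turn the polynomial factor into the asserted $\mathrm{e}^{-ct}(1+\log(1+\|x\|))$. Your sketch of the local Dobrushin step is in the right spirit, but note that the paper's Proposition~\ref{prop:strong feller property} is obtained by a coupling argument (following \cite[Proposition~5.3]{2019arXiv190805473F}) rather than by a uniform pointwise lower bound on the density.
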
 The proof of Theorem \ref{thm:exp ergo} is deferred to
Section \ref{sec:exp ergodicity}. Existence and regularity of the
density $f^{\pi}$ is a consequence of the estimates for the characteristic
function of the affine processes established in Section 2. The ergodicity
statement is based on a suitable application of a Harris-type theorem
combined with a coupling argument and follows some ideas taken from
\cite{2019arXiv190105815F}.

Existing literature on the exponential ergodicity in total variation
distance for affine processes on $\mathbb{R}_{\geq0}^{m}\times\mathbb{R}^{n}$
are often limited to specific affine models, see, e.g., \cite{MR3254346,2019arXiv190805473F,2019arXiv190202833F,MR3732190,JKR,MR3451177,MR3437080,MR3343292,MR2287102,MR3058984,2018arXiv181100122Z}.
Compared to Corollary \ref{thm:exp ergo} the most relevant result
in the literature are those of Zhang and Glynn \cite{2018arXiv181100122Z}
and of Mayerhofer, Stelzer and Vestweber\cite{2018arXiv181110542M},
where analogous resutls are obtained under similar conditions, but
with essentially more restrictive assumptions on the state-dependent
and state-independent jump measures. At this point it is worthwhile
to mention that, in addition to the ergodicity results the authors
also studied the central limit theorem in \cite{2018arXiv181100122Z}.
Also, in \cite{2018arXiv181110542M} the authors study affine processes
on symmetric cones which includes affine processes on $D=\mathbb{R}_{\geq0}^{m}$
as a particular case. Let us also mention that the ergodicity results
in the works \cite{2019arXiv190805473F,2019arXiv190202833F,MR3732190,MR3343292,MR2287102,MR3058984},
where the diffusion component is allowed to be degenerate, are limited
to particular classes of affine processes.

\subsection{Structure of the work}

In Section \ref{sec:estimates on the char func} we prove a pointwise estimate for the characteristic
function of the affine process on the canonical state space. Based
on this estimate we prove Theorem \ref{thm:existence of densities}
as well as the regularity of the density for the invariant measure
in Section \ref{sec:ex and reg of denisities}. Finally, ergodicity in the total variation distance
is shown in Section \ref{sec:exp ergodicity}.

\label{sec:transition density}

\section{Estimate on the characteristic function}
\label{sec:estimates on the char func}
\subsection{Main statement}

The following is our main result for this section. \begin{thm}\label{thm: estimate characteristic function}
Let $P_{t}(x,\cdot)$ be the transition probability of an affine process
on $D$ with an admissible parameter set $(a,\alpha,b,\beta,m,\mu)$.
Let $\phi$, $\psi$ be the corresponding solutions to \eqref{eq:riccati eq for psi}.
Assume that $\mathcal{K}$ given in \eqref{eq:density condition}
has full rank and that $\min_{i\in\{1,\dots,m\}}\alpha_{i,ii}>0$.
Then, for each $t_{0}>0$ and each $\vartheta>0$ there exist constants
$C_{t_{0},\vartheta},\thinspace M_{t_{0},\vartheta}, \thinspace\delta_{t_{0}, \vartheta}>0$
such that
\[
\mathrm{e}^{\mathrm{Re}(\phi(t,\mathrm{i}u))}\leq C_{t_{0},\vartheta}(1+\|u_{I}\|)^{-\lambda(\vartheta)}\mathrm{e}^{-\delta_{t_{0}, \vartheta}}\|u_{J}\|^{2}
\]
holds for all $u\in\mathbb{R}^{d}$ with $\|u\|\geq M_{t_{0},\vartheta}$
and all $t\geq t_{0}$, where
\[
\lambda(\vartheta)=\min_{i\in I}b_{i}/\widehat{\alpha}_{i,ii}(\vartheta)\geq0
\]
and
\begin{align}\label{eq: defi alpha hat-2}
\widehat{\alpha}_{i,ii}(\vartheta)=\alpha_{i,ii}+\int_{\lbrace\Vert\xi\Vert\leq\vartheta\rbrace}\Vert\xi\Vert^{2}\mu_{i}(\mathrm{d}\xi),\quad i\in I.
\end{align}
\end{thm}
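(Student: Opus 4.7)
The plan is to analyse $\mathrm{Re}(\phi(t,\mathrm{i}u))=\int_0^t\mathrm{Re}(F(\psi(s,\mathrm{i}u)))\,\mathrm{d}s$ directly, extracting the two announced factors separately. Since $\psi(s,\mathrm{i}u)\in\mathcal{U}$ for all $s\geq 0$ (the Riccati flow preserves $\mathcal{U}$) and every summand of $F$ evaluated on $\mathcal{U}$ has nonpositive real part (by admissibility plus the structure of $a$), the map $t\mapsto\mathrm{Re}(\phi(t,\mathrm{i}u))$ is nonincreasing. Hence it suffices to prove the bound at $t=t_0$ and the extension to all $t\geq t_0$ is automatic.

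For the Gaussian decay in $u_J$, note that $\psi_J(s,\mathrm{i}u)=\mathrm{i}\mathrm{e}^{\beta_{JJ}^\top s}u_J$ is explicit and purely imaginary, and the block structure of $a$ forces $\langle\psi,a\psi\rangle=\langle\psi_J,a_{JJ}\psi_J\rangle=-\langle\mathrm{e}^{\beta_{JJ}^\top s}u_J,a_{JJ}\mathrm{e}^{\beta_{JJ}^\top s}u_J\rangle$. Integrating produces a contribution $-\langle u_J,Q(t_0)u_J\rangle$ to $\mathrm{Re}(\phi)$ with $Q(t_0):=\int_0^{t_0}\mathrm{e}^{\beta_{JJ} s}a_{JJ}\mathrm{e}^{\beta_{JJ}^\top s}\,\mathrm{d}s$. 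A standard Kalman-controllability computation identifies $\ker Q(t_0)$ with $\bigcap_{k=0}^{n-1}\ker((\beta_{JJ}^\top)^k a_{JJ})$, so the full-rank hypothesis on $\mathcal{K}$ forces $Q(t_0)>0$ and yields the factor $\mathrm{e}^{-\delta_{t_0}\Vert u_J\Vert^2}$, with $\delta_{t_0}$ the smallest eigenvalue of $Q(t_0)$.

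The harder part is the polynomial decay in $u_I$. Setting $v_i(s):=-\mathrm{Re}\,\psi_i(s,\mathrm{i}u)\geq 0$ for $i\in I$, one has $\mathrm{Re}\,\langle b,\psi(s,\mathrm{i}u)\rangle=-\langle b_I,v_I(s)\rangle$, producing a term $-\sum_{i\in I}b_i\int_0^{t_0}v_i(s)\,\mathrm{d}s$ in $\mathrm{Re}(\phi)$. I would lower bound $\int_0^{t_0}v_i\,\mathrm{d}s$ by a scalar Riccati comparison: from $\partial_s v_i=-\mathrm{Re}\,R_i(\psi(s,\mathrm{i}u))$, expand $\langle\psi,\alpha_i\psi\rangle$ and use the admissibility constraint $\alpha_{i,kl}=0$ for $k\in I\setminus\{i\}$ or $l\in I\setminus\{i\}$, which leaves only the diagonal term $\alpha_{i,ii}\psi_i^2$ and couplings with $\psi_J$; split the jump integral over $\{\Vert\xi\Vert\leq\vartheta\}$ and $\{\Vert\xi\Vert>\vartheta\}$, and apply a second-order estimate of the form $\mathrm{Re}(1+z-\mathrm{e}^z)\geq-\tfrac{1}{2}|z|^2$ on the small-jump part to match its contribution with a quadratic form whose diagonal coefficient is precisely $\hat\alpha_{i,ii}(\vartheta)-\alpha_{i,ii}=\int_{\Vert\xi\Vert\leq\vartheta}\Vert\xi\Vert^2\mu_i(\mathrm{d}\xi)$; finally, use the admissibility inequality $\beta_{ki}-\int\xi_k\mu_i(\mathrm{d}\xi)\geq 0$ for $k\in I\setminus\{i\}$ to sign the cross-drift terms. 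Together these should place $v_i$ above the solution of a scalar Riccati $V'=\hat\alpha_{i,ii}(\vartheta)(|u_i|^2-V^2)$, $V(0)=0$, whose explicit computation gives $\int_0^{t_0}V(s)\,\mathrm{d}s=\hat\alpha_{i,ii}(\vartheta)^{-1}\log(1+\hat\alpha_{i,ii}(\vartheta)^2|u_i|^2t_0^2)/2$. Taking the minimum over $i$ and using $\Vert u_I\Vert\leq\sum_i|u_i|$ then produces the factor $(1+\Vert u_I\Vert)^{-\lambda(\vartheta)}$.

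The main obstacle lies in this Riccati analysis: the multi-component system couples $\psi_i$ to every other $\psi_j$ ($j\neq i$) via $\beta$, $\mu_i$, and the off-diagonal entries of $\alpha_i$, as well as to $\psi_J$ via the mixed $\alpha_{i,Ji}$-blocks and the jump integral. Packaging all of these couplings into linear-in-$v_i$ or bounded perturbations of a scalar Riccati majorant---without spoiling the leading coefficient $\hat\alpha_{i,ii}(\vartheta)$---is the key technical step. The cutoff $\vartheta$ is chosen precisely so that the small-jump term produces a quadratic form with diagonal $\hat\alpha_{i,ii}(\vartheta)$, while the large-jump term, being integrable against $\mu_i$ by admissibility, contributes only an additive constant absorbed into $C_{t_0,\vartheta}$. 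A further subtlety is that $\mathrm{Im}\,\psi_i(s)$ is not constantly $u_i$; this is handled by a short-time two-sided comparison on $[0,t_0]$ controlling $|\psi_i|$ via Gronwall, which suffices since the logarithmic lower bound is already achieved on any fixed interval.
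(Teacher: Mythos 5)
Your first two moves are sound and parallel the paper. The monotonicity of $t\mapsto\mathrm{Re}\,\phi(t,\mathrm{i}u)$ (each summand of $\mathrm{Re}\,F$ is nonpositive on $\mathcal{U}$) does reduce matters to $t=t_0$, and the Gaussian factor in $u_J$ via $\psi_J(s,\mathrm{i}u)=\mathrm{i}\mathrm{e}^{\beta_{JJ}^{\top}s}u_J$, the block structure of $a$, and the Kalman rank condition is exactly the paper's Lemma on $\delta_{t_0}$.

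The polynomial decay argument, however, has a genuine gap, and the proposed scalar Riccati comparison cannot be repaired as stated. You posit $v_i\ge V$ with $V'=\widehat{\alpha}_{i,ii}(\vartheta)\bigl(|u_i|^2-V^2\bigr)$, $V(0)=0$, and claim $\int_0^{t_0}V\,\mathrm{d}s=\tfrac{1}{2}\widehat{\alpha}_{i,ii}(\vartheta)^{-1}\log\bigl(1+\widehat{\alpha}_{i,ii}(\vartheta)^2|u_i|^2t_0^2\bigr)$. This formula is incorrect: the solution of that Riccati is $V(t)=|u_i|\tanh\bigl(\widehat{\alpha}_{i,ii}|u_i|t\bigr)$, so $\int_0^{t_0}V\,\mathrm{d}s=\widehat{\alpha}_{i,ii}^{-1}\log\cosh\bigl(\widehat{\alpha}_{i,ii}|u_i|t_0\bigr)$, which grows \emph{linearly} in $|u_i|$, not logarithmically. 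If $v_i\ge V$ really held, then $\mathrm{e}^{-b_i\int_0^{t_0}v_i}$ would decay exponentially in $|u_i|$, which is simply false for an affine process with jumps (and indeed contradicts the polynomial rate in the theorem). The comparison $v_i\ge V$ cannot hold because the true driving $\langle g,\alpha_i g\rangle$ is not $\ge|u_i|^2$ for all $s\in(0,t_0]$: $g_i$ obeys $\partial_s g_i=2f_i\alpha_{i,ii}g_i+\cdots$ and collapses once $f_i$ becomes large negative, so the driving shuts off after a time of order $\|u\|^{-1}$. This is precisely the mechanism producing $\log\|u\|$ rather than $\|u\|$ in the integral.

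The paper's route avoids this trap by separating the two phases. It rescales time by $\|u\|$, setting $F(t,u)=\|u\|^{-1}\mathrm{Re}\,\psi(t/\|u\|,\mathrm{i}u)$, and uses a compactness argument near $\{0\}\times K$ (with $K$ the unit sphere intersected with the cone $\{\langle u,\alpha_i u\rangle\ge\varepsilon\|u\|^2\}$) to show that $F_i$ reaches $-\varrho$ after a fixed rescaled time $\delta$, uniformly in $u$ in that cone. From then on it uses the \emph{driving-free} inequality $\partial_t F_i\le\widehat{\alpha}_{i,ii}(\vartheta)F_i^2+\|u\|^{-1}\widehat{\beta}_{ii}(\vartheta)F_i$; scaled back, $f_i(s,\mathrm{i}u)\approx-1/(\widehat{\alpha}_{i,ii}s)$ on $[\delta/\|u\|,t_0]$, and integrating $1/s$ over an interval whose lower endpoint is $\sim\|u\|^{-1}$ produces $\log\|u\|$. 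This two-phase structure is the heart of the proof and is what your proposal is missing: your single driven Riccati merges the two phases and gets the wrong asymptotics. You also omit the cone decomposition (Lemma \ref{lemma: decomposition of state space}), which the paper needs to cover the regime where $u_I$ is small relative to $u_J$ and none of the cones $A_i$, $i\ge 1$, applies; there the Gaussian factor in $A_0$ must carry the entire bound.
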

Note that $\mathrm{Re}(\psi(t,\mathrm{i}u))\leq0$ holds
for $t\geq0$ and $u\in\mathbb{R}^{d}$. In particular, it holds that
\[
\left|\int_{D}\mathrm{e}^{\mathrm{i}\langle u,\xi\rangle}P_{t}(x,\mathrm{d}\xi)\right|=\left|\mathrm{e}^{\langle x,\psi(t,\mathrm{i}u)\rangle+\phi(t,\mathrm{i}u)}\right|\leq\mathrm{e}^{\mathrm{Re}(\phi(t,\mathrm{i}u))}.
\]
The rest of this subsection is devoted to the proof of Theorem \ref{thm: estimate characteristic function}.
It is obtained as a consequence of several technical lemmas proven
below.

\subsection{Technical lemmas}

Set
\[
U:=\left\{ \mathrm{Re}\thinspace u\thinspace:\thinspace u\in\mathcal{U}\right\} =\left\{ (x_{I},0)\in\mathbb{R}^{d}:x_{I}\in\mathbb{R}_{\leq0}^{m}\right\} .
\]
For $(x,y)\in U\times\mathbb{R}^{d}$ and $u\in\mathbb{R}^{d}\backslash\{0\}$,
we define
\begin{align*}
H_{1}^{i}(x,y;u) & =\langle x,\alpha_{i}x\rangle-\langle y,\alpha_{i}y\rangle+\frac{1}{\Vert u\Vert}\sum_{k=1}^{d}\beta_{ki}x_{k}\\
 & \quad+\frac{1}{\Vert u\Vert^{2}}\int_{D\backslash\lbrace0\rbrace}\left(\mathrm{e}^{\Vert u\Vert\langle\xi,x\rangle}\cos\left(\Vert u\Vert\langle\xi,y\rangle\right)-1-\Vert u\Vert\langle\xi,x\rangle\right)\mu_{i}(\mathrm{d}\xi),\\
H_{2}^{i}(x,y;u) & =2\langle x,\alpha_{i}y\rangle+\frac{1}{\Vert u\Vert}\sum_{k=1}^{d}\beta_{ki}y_{k}\\
 & \quad+\frac{1}{\Vert u\Vert^{2}}\int_{D\backslash\lbrace0\rbrace}\left(\mathrm{e}^{\Vert u\Vert\langle\xi,x\rangle}\sin\left(\Vert u\Vert\langle\xi,y\rangle\right)-\Vert u\Vert\langle\xi,y\rangle\right)\mu_{i}(\mathrm{d}\xi).
\end{align*}
These functions will serve as the vector fields for the Riccati equations
when decomposed into their real- and imaginary parts and rescaled
appropriately.

\begin{lem}\label{lem:continuity of H_1 and H_2} The functions $H_{k}^{i}(x,y;u)$,
$k=1,2$, are continuous in $(x,y,u)\in U\times\mathbb{R}^{d}\times\mathbb{R}^{d}\backslash\{0\}$
for each $i\in I$. \end{lem}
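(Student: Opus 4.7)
The plan is to split $H_1^i$ and $H_2^i$ into their non-integral and integral parts and argue separately. The non-integral pieces---the quadratic forms $\langle x, \alpha_i x\rangle - \langle y, \alpha_i y\rangle$, $2\langle x, \alpha_i y\rangle$ and the linear-in-$(x,y)$ terms divided by $\|u\|$---are jointly continuous on $U \times \mathbb{R}^d \times (\mathbb{R}^d \setminus \{0\})$ by inspection, so the only real work is establishing continuity of the two integrals against $\mu_i$. These I would handle by dominated convergence on an arbitrary compact neighbourhood of a fixed point $(x_0, y_0, u_0) \in U\times\mathbb{R}^d\times(\mathbb{R}^d\setminus\{0\})$.

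To set up the dominating function cleanly, I would package both integrands as real and imaginary parts of a single complex expression. Setting $z = z(\xi, x, y, u) := \|u\|(\langle \xi, x\rangle + \mathrm{i}\langle \xi, y\rangle)$, a short calculation gives
\[
e^{\|u\|\langle \xi, x\rangle}\cos(\|u\|\langle \xi, y\rangle) - 1 - \|u\|\langle \xi, x\rangle = \mathrm{Re}(e^z - 1 - z),
\]
while the integrand of $H_2^i$ equals $\mathrm{Im}(e^z - 1 - z)$. The decisive observation is that $x \in U$ forces $x_J = 0$ and $x_I \leq 0$ component-wise, while $\xi \in D$ has $\xi_I \geq 0$, so $\mathrm{Re}(z) = \|u\|\langle \xi_I, x_I\rangle \leq 0$ and hence $|e^z| \leq 1$ throughout.

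Next I would exploit this sign to dominate $|e^z - 1 - z|/\|u\|^2$ by a $\mu_i$-integrable function on any compact $K \subset U \times \mathbb{R}^d \times (\mathbb{R}^d\setminus\{0\})$. On $\{\|\xi\| \leq 1\}$, the integral representation $e^z - 1 - z = z^2\int_0^1 (1-s)e^{sz}\,\mathrm{d}s$ combined with $|e^{sz}| \leq 1$ for $s \in [0,1]$ gives $|e^z - 1 - z| \leq |z|^2/2$; after dividing by $\|u\|^2$ and bounding $|z|^2 \leq \|u\|^2\|\xi\|^2(\|x\|^2 + \|y\|^2)$ this becomes an $O(\|\xi\|^2)$ bound with constant controlled on $K$, integrable by Definition \ref{def:admissible parameters}(iv). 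On $\{\|\xi\| > 1\}$ I would instead use the crude estimate $|e^z - 1 - z| \leq 2 + |z| \leq 2 + \|u\|\|\xi\|(\|x\| + \|y\|)$; dividing by $\|u\|^2$ and using that $\|u\|$ stays bounded away from $0$ on $K$ yields an $O(1 + \|\xi\|)$ bound, which is $\mu_i$-integrable thanks to the $\|\xi\|\wedge\|\xi\|^2$ moment condition on $\mu_i$.

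With the dominating function in place and pointwise continuity of the integrand in $(x,y,u)$ obvious for each fixed $\xi$, dominated convergence delivers the joint continuity of the integrals and finishes the argument. I do not foresee any genuine obstacle; the only subtle point is the combined use of the two regimes $\{\|\xi\| \leq 1\}$ and $\{\|\xi\| > 1\}$, together with the essential exploitation of $\mathrm{Re}(z) \leq 0$---which is precisely where the hypothesis $x \in U$ enters---to keep $|e^z|$ bounded and thus secure the large-$\xi$ estimate in a way that matches the moment assumption on $\mu_i$.
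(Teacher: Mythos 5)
Your proof is correct and takes essentially the same route as the paper's: dominated convergence applied to the $\mu_i$-integrals, with the dominating function obtained by splitting into the regimes $\{\|\xi\|\leq 1\}$ and $\{\|\xi\|>1\}$ and crucially exploiting $\langle\xi,x\rangle\leq 0$ for $x\in U$ (equivalently $|e^z|\leq 1$). Your packaging via the complex quantity $z$ and the Taylor remainder $e^z-1-z=z^2\int_0^1(1-s)e^{sz}\,\mathrm{d}s$ cleanly unifies the paper's two separate real estimates (the cosine-deviation-plus-exponential-remainder split for $H_1^i$ and the sine-plus-exponential split for $H_2^i$), but the underlying argument is the same.
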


\begin{proof} Fix $i\in I$ and let $(x,y,u)\in U\times\mathbb{R}^{d}\times\mathbb{R}^{d}\backslash\{0\}$
be arbitrary. It suffices to prove continuity in $(x,y,u)$ of the
integrals with respect to $\mu_{i}$. We have
\begin{align*}
 & \left\vert \mathrm{e}^{\Vert u\Vert\langle\xi,x\rangle}\cos\left(\Vert u\Vert\langle\xi,y\rangle\right)-1-\Vert u\Vert\langle\xi,x\rangle\right\vert \\
 & \qquad\qquad\qquad\leq\left\vert \mathrm{e}^{\Vert u\Vert\langle\xi,x\rangle}\cos\left(\Vert u\Vert\langle\xi,y\rangle\right)-\mathrm{e}^{\Vert u\Vert\langle\xi,x\rangle}\right\vert +\left\vert \mathrm{e}^{\Vert u\Vert\langle\xi,x\rangle}-1-\Vert u\Vert\langle\xi,x\rangle\right\vert \\
 & \qquad\qquad\qquad\leq\mathbbm{1}_{\lbrace\Vert\xi\Vert\leq1\rbrace}(\xi)\Vert u\Vert^{2}\Vert\xi\Vert^{2}\left(\Vert x\Vert^{2}+\Vert y\Vert^{2}\right)+\mathbbm{1}_{\lbrace\Vert\xi\Vert>1\rbrace}(\xi)\left(2\Vert u\Vert\Vert\xi\Vert\Vert x\Vert+2\right),
\end{align*}
where we used that $x\in U$ and $\vert\cos(\Vert u\Vert\langle\xi,y\rangle)-1\vert=2\sin^{2}(\Vert u\Vert\langle\xi,y\rangle/2)\leq2\wedge\left(\Vert u\Vert^{2}\Vert\xi\Vert^{2}\Vert y\Vert^{2}\right)$.
In view of \eqref{cond: conservative}, we can apply dominated convergence
theorem to obtain the continuity of $H_{1}^{i}$. Similarly, we estimate
\begin{align*}
 & \left\vert \mathrm{e}^{\Vert u\Vert\langle\xi,x\rangle}\sin\left(\Vert u\Vert\langle\xi,y\rangle\right)-\Vert u\Vert\langle\xi,y\rangle\right\vert \\
 & \qquad\qquad\qquad\leq\left\vert \sin\left(\Vert u\Vert\langle\xi,y\rangle\right)-\Vert u\Vert\langle\xi,y\rangle\right\vert +\left\vert \mathrm{e}^{\Vert u\Vert\langle\xi,x\rangle}-1\right\vert \Vert u\Vert\vert\langle\xi,y\rangle\vert\\
 & \qquad\qquad\qquad=\left\vert \int_{0}^{1}\Vert u\Vert\langle\xi,y\rangle\cos\left(r\Vert u\Vert\langle\xi,y\rangle\right)\mathrm{d}r-\Vert u\Vert\langle\xi,y\rangle\right\vert +\left\vert \mathrm{e}^{\Vert u\Vert\langle\xi,x\rangle}-1\right\vert \Vert u\Vert\vert\langle\xi,y\rangle\vert\\
 & \qquad\qquad\qquad\leq\mathbbm{1}_{\lbrace\Vert\xi\Vert\leq1\rbrace}(\xi)\Vert u\Vert^{2}\Vert\xi\Vert^{2}\left(2\Vert y\Vert^{2}+\Vert x\Vert\Vert y\Vert\right)+\mathbbm{1}_{\lbrace\Vert\xi\Vert>1\rbrace}(\xi)\left(1+2\Vert u\Vert\Vert\xi\Vert\Vert y\Vert\right).
\end{align*}
Once again we can apply dominated convergence theorem to obtain the
continuity of $H_{2}^{i}$. \end{proof}

\begin{lem}\label{lem:key lemma for existence of a density} For
any $u\not=0$, let
\begin{equation}
F(t,u)=\frac{1}{\Vert u\Vert}\mathrm{Re}\thinspace\psi\left(\frac{t}{\Vert u\Vert},\mathrm{i}u\right)\quad\text{and}\quad G(t,u)=\frac{1}{\Vert u\Vert}\mathrm{Im}\thinspace\psi\left(\frac{t}{\Vert u\Vert},\mathrm{i}u\right).\label{eq: defi of F and G}
\end{equation}
Fix $i\in I$. Then for each $\varepsilon>0$ there exist some constants
$t_{0}>0$, $\varrho>0$, and $M>0$ such that
\[
F_{i}(t,u)\leq-\varrho t,\quad t\in(0,t_{0}],
\]
for all $u\in\mathbb{R}^{d}$ with $\Vert u\Vert\geq M$ and $\langle u,\alpha_{i}u\rangle\geq\varepsilon\Vert u\Vert^{2}$.
\end{lem}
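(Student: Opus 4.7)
The plan is to transform \eqref{eq:riccati eq for psi} into a coupled ODE system for $(F_I, G_I)$, observe that its $i$-th right-hand side is strictly negative at $t=0$ with a margin of at least $\varepsilon$, and propagate this negativity for a short time via uniform-in-$u$ continuity estimates.

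First I would verify that substituting $\psi_i(t/\Vert u\Vert, \mathrm{i}u) = \Vert u\Vert(F_i(t,u) + \mathrm{i}G_i(t,u))$ into the Riccati equation and rescaling time by $1/\Vert u\Vert$ yields
\[
\partial_t F_i(t,u) = H_1^i(F(t,u), G(t,u); u), \qquad \partial_t G_i(t,u) = H_2^i(F(t,u), G(t,u); u), \qquad i \in I,
\]
with initial data $F(0,u) = 0$ and $G(0,u) = u/\Vert u\Vert$; the $J$-components are explicit since $\psi_J(t, \mathrm{i}u) = \mathrm{e}^{\beta_{JJ}^\top t}\mathrm{i}u_J$, so $F_J \equiv 0$ and $G_J(t,u) = \Vert u\Vert^{-1}\exp(\beta_{JJ}^\top t/\Vert u\Vert)u_J$. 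Evaluating $H_1^i$ at $t = 0$,
\[
H_1^i(0, u/\Vert u\Vert; u) = -\frac{\langle u, \alpha_i u\rangle}{\Vert u\Vert^2} + \frac{1}{\Vert u\Vert^2}\int_{D\setminus\{0\}} (\cos\langle \xi, u\rangle - 1)\,\mu_i(\mathrm{d}\xi) \leq -\varepsilon,
\]
using that the integrand is non-positive and the standing hypothesis $\langle u, \alpha_i u\rangle \geq \varepsilon\Vert u\Vert^2$.

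Second I would establish uniform-in-$u$ bounds (for $\Vert u\Vert \geq 1$) of the form $|H_1^i|, |H_2^i| \leq C$ on a small neighborhood of the initial data, together with a Lipschitz-type estimate
\[
|H_1^i(F,G;u) - H_1^i(0, u/\Vert u\Vert; u)| \leq C\bigl(\Vert F\Vert + \Vert G - u/\Vert u\Vert\Vert\bigr)
\]
on the same neighborhood. The key observation is that $F \in U$, i.e., $\mathrm{Re}\,\psi_I(\cdot,\mathrm{i}u) \leq 0$ (a standard admissibility consequence), so $z := \Vert u\Vert\langle \xi, F\rangle \leq 0$ for $\xi \in D$, permitting the bounds $|\mathrm{e}^z\cos w - 1 - z| \leq z^2/2 + w^2/2$ on the small-jump set $\{\Vert\xi\Vert \leq 1\}$ and $|\mathrm{e}^z\cos w - 1 - z| \leq 2 + |z|$ on $\{\Vert\xi\Vert > 1\}$. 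After dividing by $\Vert u\Vert^2$, these are $\Vert u\Vert$-uniformly integrable against $\mu_i$ thanks to the admissibility conditions $\int_{\Vert\xi\Vert\leq 1}\Vert\xi\Vert^2\mu_i(\mathrm{d}\xi) + \int_{\Vert\xi\Vert > 1}\Vert\xi\Vert\mu_i(\mathrm{d}\xi) < \infty$. Integrating $\partial_t F$ and $\partial_t G$ over $[0, s]$ then delivers $\Vert F(s,u)\Vert + \Vert G(s,u) - u/\Vert u\Vert\Vert \leq Cs$, uniformly in $\Vert u\Vert \geq 1$.

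Combining these ingredients yields $H_1^i(F(s,u), G(s,u); u) \leq -\varepsilon + C' s \leq -\varepsilon/2$ for $s \in [0, t_0]$, after a suitable choice of $t_0$ and $M$. Integrating over $[0,t]$ and using $F_i(0, u) = 0$ gives $F_i(t, u) \leq -(\varepsilon/2)t$, proving the claim with $\varrho = \varepsilon/2$. The main obstacle is the uniform-in-$u$ Lipschitz bound for the jump integral of $H_1^i$: its $(F,G)$-modulus of continuity depends a priori on $\Vert u\Vert$ because $\Vert u\Vert\langle \xi, \cdot\rangle$ appears inside exponential and trigonometric functions. The split of the integration domain above resolves this by letting Taylor bounds dominate for small jumps (the resulting scale being $\Vert u\Vert$-independent after dividing by $\Vert u\Vert^2$ and using $\int_{\Vert\xi\Vert\leq 1}\Vert\xi\Vert^2\mu_i < \infty$) and by letting the prefactor $1/\Vert u\Vert^2$ absorb leftover powers of $\Vert u\Vert$ in the large-jump regime, where only the first moment of $\mu_i$ is available.
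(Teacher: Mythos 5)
Your overall strategy coincides with the paper's: rewrite the Riccati equation as an ODE system for the rescaled quantities $(F,G)$ with the explicit initial data $(0, u/\Vert u\Vert)$, observe that $H_1^i$ is bounded away from zero near that initial data uniformly in $\Vert u\Vert\geq M$, check that the vector field $(H_1^i,H_2^i)$ is bounded there, and integrate over a short time interval. The mechanism you use to propagate the negativity of $H_1^i$ differs slightly. You evaluate $H_1^i$ exactly at $(0,u/\Vert u\Vert)$ to get $\leq -\varepsilon$, and then rely on a \emph{uniform-in-$u$ Lipschitz estimate} for $H_1^i$ in $(x,y)$ to conclude $H_1^i(F(s,u),G(s,u);u)\leq -\varepsilon+C's$. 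The paper avoids Lipschitz altogether: using $\cos\leq 1$ it bounds $H_1^i(x,y;u)$ from above by $-\langle y,\alpha_i y\rangle + (\text{a }y\text{-free remainder depending on }x\text{ and }\Vert u\Vert)$, shows this remainder is $\leq \varepsilon/4$ on a fixed neighbourhood $O_1$ of $0$ in $\mathcal{U}$ for $\Vert u\Vert\geq M$, and $-\langle y,\alpha_iy\rangle\leq -\varepsilon/2$ on a neighbourhood $O_2$ of the compact set $K=\{\Vert u\Vert=1:\langle u,\alpha_iu\rangle\geq\varepsilon\}$; the short-time persistence inside $O_1\times O_2$ then follows purely from the boundedness of the vector field and the positive distance $l$ from $\{0\}\times K$ to $\partial(O_1\times O_2)$, giving $t_0 = l/\gamma$.

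One point where your writeup falls short of a full proof: the bounds you actually record, $\vert\mathrm{e}^z\cos w -1-z\vert\leq z^2/2+w^2/2$ (small jumps) and $\leq 2+|z|$ (large jumps), are the ones needed for \emph{boundedness} of $H_1^i,H_2^i$; they do not by themselves deliver the uniform Lipschitz modulus in $(x,y)$ that your argument crucially invokes to pass from the single point $(0,u/\Vert u\Vert)$ to nearby $(F(s,u),G(s,u))$. That Lipschitz estimate is in fact true and can be obtained by the same domain split — e.g.\ differentiating the jump integral in $x$ gives $\Vert u\Vert^{-1}\int \xi_k(\mathrm{e}^z\cos w-1)\mathrm{d}\mu_i$, and on $\{\Vert\xi\Vert\leq 1\}$ one bounds $\vert \mathrm{e}^z\cos w - 1\vert\leq |z|+|w|\leq \Vert u\Vert\Vert\xi\Vert(\Vert x\Vert+\Vert y\Vert)$ so the factor $\Vert u\Vert$ cancels — but you should carry this out rather than assert it. Alternatively, adopting the paper's $\cos\leq 1$ reduction and persistence-time argument removes the need for any Lipschitz bound and lets you run your proof with only the boundedness estimates you already have.
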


\begin{proof} It is easy to see that $F$ and $G$ satisfy
\[
\begin{cases}
\partial_{t}F_{i}(t,u)=H_{1}^{i}(F(t,u),G(t,u);u), & F(0,u)=0\\
\partial_{t}G_{i}(t,u)=H_{2}^{i}(F(t,u),G(t,u);u), & G(0,u)=\frac{u}{\|u\|}.
\end{cases}
\]
Moreover, it holds $F_{J}\equiv0$ and $G_{J}(t,\mathrm{i}u)=\exp(\beta_{JJ}^{\top}t/\Vert u\Vert)u_{J}\Vert u\Vert^{-1}$.

\emph{Step 1}: Choose a small open neighbourhood $\widetilde{O}_{1}\subset\mathbb{R}^{d}$
of $0$, and also a large enough $M>0$ such that for $x\in O_{1}:=\widetilde{O}_{1}\cap\mathcal{U}$
and $\Vert u\Vert\ge M$,
\[
\left|\langle x,\alpha_{i}x\rangle+\frac{1}{\Vert u\Vert}\sum_{k=1}^{d}\beta_{ki}x_{k}+\frac{1}{\Vert u\Vert^{2}}\int_{D\backslash\lbrace0\rbrace}\left(\mathrm{e}^{\Vert u\Vert\langle\xi,x\rangle}-1-\Vert u\Vert\langle\xi,x\rangle\right)\mu_{i}(\mathrm{d}\xi)\right|\le\frac{\varepsilon}{4}.
\]

\emph{Step 2}: Define
\[
K:=\left\{ u\in\mathbb{R}^{d}:\langle u,\alpha_{i}u\rangle\geq\varepsilon,\ \ \|u\|=1\right\} .
\]
Then we can find an open neighbourhood $O_{2}$ of $K$ such that
\[
\langle y,\alpha_{i}y\rangle\geq\frac{\varepsilon}{2},\quad y\in O_{2}.
\]

\emph{Step 3}: Based on the last two steps, we now have, for $x\in O_{1}$,
$y\in O_{2}$, and $u\in\mathbb{R}^{d}$ satisfying $\Vert u\Vert\geq M$,
\begin{align*}
H_{1}^{i}(x,y;u) & =\langle x,\alpha_{i}x\rangle-\langle y,\alpha_{i}y\rangle+\frac{1}{\Vert u\Vert}\sum_{k=1}^{d}\beta_{ki}x_{k}\\
 & \quad+\frac{1}{\Vert u\Vert^{2}}\int_{D\backslash\lbrace0\rbrace}\left(\mathrm{e}^{\Vert u\Vert\langle\xi,x\rangle}\cos\left(\Vert u\Vert\langle\xi,y\rangle\right)-1-\Vert u\Vert\langle\xi,x\rangle\right)\mu_{i}(\mathrm{d}\xi)\\
 & \le-\langle y,\alpha_{i}y\rangle+\langle x,\alpha_{i}x\rangle+\frac{1}{\Vert u\Vert}\sum_{k=1}^{d}\beta_{ki}x_{k}\\
 & \quad+\frac{1}{\Vert u\Vert^{2}}\int_{D\backslash\lbrace0\rbrace}\left(\mathrm{e}^{\Vert u\Vert\langle\xi,x\rangle}-1-\Vert u\Vert\langle\xi,x\rangle\right)\mu_{i}(\mathrm{d}\xi)\\
 & \le-\frac{\varepsilon}{2}+\frac{\varepsilon}{4}=-\frac{\varepsilon}{4}.
\end{align*}

\emph{Step 4}: It is easy to verify that $H_{1}^{i}(x,y;u)$ and $H_{2}^{i}(x,y;u)$
are both bounded for $(x,y,\Vert u\Vert)\in O_{1}\times O_{2}\times[M,\infty)$,
i.e., there exists $\gamma>0$ such that
\[
|H_{1}^{i}(x,y;u)|+|H_{2}^{i}(x,y;u)|\leq\gamma,\quad\text{for all }(x,y,\Vert u\Vert)\in O_{1}\times O_{2}\times[M,\infty).
\]

\emph{Step 5}: Note that $O_{1}\times O_{2}$ is an open neighbourhood
of $\{0\}\times K\subset\mathcal{U}\times\mathbb{R}^{d}$ in the relative
topology w.r.t. $\mathcal{U}\times\mathbb{R}^{d}$. Since $\{0\}\times K$
is compact, the boundary of $O_{1}\times O_{2}$ (which is closed)
has a positive distance $l>0$ to the closed set $\{0\}\times K$.
So starting from a point in $\{0\}\times K$, running according to
the vector field $(H_{1}^{i},H_{2}^{i})$, we will have to wait at
least for a positive time $t_{0}:=l/\gamma$ to attain the boundary
of $O_{1}\times O_{2}$. At this point we implicitly use the fact
that $F_{I}(t,u)=\|u\|^{-1}\mathrm{Re}\ \psi(t/\|u\|,\mathrm{i}u)\leq0$.
Recall that for $(x,y,\Vert u\Vert)\in O_{1}\times O_{2}\times[M,\infty)$,
\[
H_{1}^{i}(x,y;u)\leq-\frac{\varepsilon}{4}.
\]
Hence, we obtain for $t\in(0,t_{0})$
\[
F_{i}(t,u)=\int_{0}^{t}\partial_{s}F_{i}(s,u)\mathrm{d}s=\int_{0}^{t}H_{1}^{i}(F(s,u),G(s,u);u)\mathrm{d}s\leq-\frac{\varepsilon}{4}t,
\]
for all $u\in\mathbb{R}^{d}$ with $\Vert u\Vert\ge M$ and $\langle u,\alpha_{i}u\rangle\geq\varepsilon\Vert u\Vert^{2}$.
\end{proof}

The remainder of our proof is motivated by \cite[Theorem 4.1]{MR3084047},
and we will extend it to the more general case where the L\'evy measures
$\mu=(\mu_{1},\dots,\mu_{m})$ do not have finite first moment for
the small jumps.

\begin{prop} Fix $i\in I$. Then for each $\varepsilon>0$,
$\vartheta>0$, and $t_{0}>0$, there exist some constants $M_{\varepsilon,t_{0}},\thinspace C_{\varepsilon,t_{0},\vartheta}>1$
such that
\begin{equation}
\exp\left(b_{i}\int_{0}^{t}\mathrm{Re}(\psi_{i}(s,\mathrm{i}u))\mathrm{d}s\right)\leq C_{\varepsilon,t_{0},\vartheta}\left(1+\Vert u\Vert\right)^{-b_{i}/\widehat{\alpha}_{i,ii}(\vartheta)},\qquad t\geq t_{0},\label{eq:integrability of the characteristic function}
\end{equation}
for all $u\in\mathbb{R}^{d}$ with $\Vert u\Vert\geq M_{\varepsilon,t_{0}}$
and $\langle u,\alpha_{i}u\rangle\geq\varepsilon\Vert u\Vert^{2}$.
\end{prop}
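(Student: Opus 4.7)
The strategy hinges on two observations. First, since $\mathrm{Re}(\psi_i(s,\mathrm{i}u))\le 0$ for every $s\ge 0$, the map $t\mapsto \int_0^t \mathrm{Re}(\psi_i(s,\mathrm{i}u))\,\mathrm{d}s$ is nonincreasing, so it is enough to establish the claimed estimate for $t=t_0$ and extend to $t\ge t_0$ by monotonicity (recall $b_i\ge 0$). Second, having fixed $t=t_0$, I would split the integral at the scale $s_1:=\tau_*/\Vert u\Vert$, where $\tau_*$ is the constant produced by Lemma~\ref{lem:key lemma for existence of a density} applied with the given $\varepsilon$, choosing $M_{\varepsilon,t_0}\ge M\vee(\tau_*/t_0)$ so that $s_1\le t_0$.

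On $[0,s_1]$, Lemma~\ref{lem:key lemma for existence of a density} gives $\mathrm{Re}(\psi_i(s,\mathrm{i}u))\le -\varrho\Vert u\Vert^2 s$, hence $\int_0^{s_1}\mathrm{Re}(\psi_i(s,\mathrm{i}u))\,\mathrm{d}s\le -\varrho\tau_*^2/2=O(1)$, and in particular $\mathrm{Re}(\psi_i(s_1,\mathrm{i}u))\le -\varrho\tau_*\Vert u\Vert$, a large negative ``starting value'' for the tail. On $[s_1,t_0]$ I would derive a Riccati-type inequality for $v(s):=\mathrm{Re}(\psi_i(s,\mathrm{i}u))$ from $\partial_s\psi_i=R_i(\psi)$. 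Writing $\psi=x+\mathrm{i}y$, one has $x_J\equiv 0$ because $\psi_J(s,\mathrm{i}u)=\mathrm{e}^{\beta_{JJ}^\top s}\mathrm{i}u_J$ is purely imaginary. Taking real parts, dropping the helpful term $-\langle y,\alpha_i y\rangle\le 0$ (as $\alpha_i\in\mathbb{S}_d^+$), bounding $\cos\le 1$ in the jump integrand, and splitting the $\mu_i$-integral at $\Vert\xi\Vert=\vartheta$ with the elementary inequality $\mathrm{e}^a-1-a\le a^2$ for $a\le 0$ (valid since $\langle x,\xi\rangle\le 0$ on $U\times D$) on the small jumps, and $\mathrm{e}^a-1-a\le|a|$ together with admissibility on the large jumps, I would obtain
\[
v'(s)\le \widehat{\alpha}_{i,ii}(\vartheta)\,v(s)^2+C_1|v(s)|+C_2,
\]
where the quadratic coefficient is exactly $\alpha_{i,ii}+\int_{\Vert\xi\Vert\le\vartheta}\Vert\xi\Vert^2\mu_i(\mathrm{d}\xi)$, cross-terms in $x_k$, $k\in I\setminus\{i\}$, being absorbed into $C_1,C_2$ via Cauchy--Schwarz and the sign of $\mathrm{Re}(\psi_k)\le 0$.

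Since Lemma~\ref{lem:key lemma for existence of a density} and continuity give $v(s)<0$ on $[s_1,t_0]$, I would pass to $w(s):=-1/v(s)>0$, in which variable the inequality reads $w'(s)\le \widehat{\alpha}_{i,ii}(\vartheta)+C_1 w(s)+C_2 w(s)^2$. On the bounded interval $[s_1,t_0]$, with $w(s_1)\le 1/(\varrho\tau_*\Vert u\Vert)$ small, Gr\"onwall (bootstrapping that $w$ itself stays bounded independently of $\Vert u\Vert$) yields $w(s)\le w(s_1)+\widehat{\alpha}_{i,ii}(\vartheta)(s-s_1)+o(1)$ uniformly in $\Vert u\Vert\to\infty$, whence
\[
v(s)\le -\bigl[\widehat{\alpha}_{i,ii}(\vartheta)(s-s_1)+1/(\varrho\tau_*\Vert u\Vert)+o(1)\bigr]^{-1}.
\]
Integrating on $[s_1,t_0]$ produces $\int_{s_1}^{t_0}v(s)\,\mathrm{d}s\le -\widehat{\alpha}_{i,ii}(\vartheta)^{-1}\log(1+\Vert u\Vert)+O(1)$; combined with the $O(1)$ from $[0,s_1]$, multiplied by $b_i$, and exponentiated, this gives~\eqref{eq:integrability of the characteristic function}, and monotonicity in $t$ completes the extension to $t\ge t_0$.

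The principal difficulty is the Riccati inequality. Matching the quadratic coefficient with precisely $\widehat{\alpha}_{i,ii}(\vartheta)$, and not a strictly larger constant, requires careful handling of the multidimensional jump integrand $\langle x,\xi\rangle=\sum_{k\in I}x_k\xi_k$, absorbing the mixed products into the lower-order $C_1|v|+C_2$ rather than into the $v^2$ term, and choosing the right elementary bound on $\mathrm{e}^a-1-a$ so that the large-jump contribution only enters as lower order. A secondary subtlety is the uniformity of the $o(1)$ in the Gr\"onwall step, which hinges on $[s_1,t_0]$ being compact and on $w(s_1)\to 0$ as $\Vert u\Vert\to\infty$, preventing the $C_2 w^2$ term from dominating.
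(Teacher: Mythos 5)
Your overall strategy is the same as the paper's: use Lemma~\ref{lem:key lemma for existence of a density} to push $\mathrm{Re}\,\psi_i$ down to a large negative value of size $\sim\Vert u\Vert$ at a microscopic time, then exploit the Riccati structure of the equation for $\mathrm{Re}\,\psi_i$ to produce the $\log\Vert u\Vert$ upon integration. The paper executes this in rescaled time, compares with the explicit Bernoulli ODE solution (Lemmas C.3 and C.5 of \cite{MR3084047}), and integrates that closed-form expression, whereas you work in unscaled time and rely on a Gr\"onwall-type bound in the variable $w=-1/v$. These are genuinely different routes to the same estimate.

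However, there is a real gap in the step you yourself flag as the principal difficulty: matching the quadratic coefficient to exactly $\widehat{\alpha}_{i,ii}(\vartheta)$. You propose to bound $\cos\le 1$ and then apply $\mathrm{e}^{a}-1-a\le a^{2}$ with $a=\langle x,\xi\rangle=\sum_{k\in I}x_k\xi_k$, absorbing cross-terms ``via Cauchy--Schwarz.'' This produces a coefficient proportional to $\Vert x_I\Vert^{2}$ (or at best $2x_i^2+2(\sum_{k\ne i}x_k\xi_k)^2$), not $x_i^{2}$, and the extra $x_k^{2}$, $k\ne i$, are genuine quadratic contributions that cannot be dumped into $C_1|v|+C_2$: they are not controlled by $|x_i|$ in general. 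The step that actually isolates $x_i^{2}$ in the paper is the sandwich $\mathrm{e}^{\langle x,\xi\rangle}\cos\langle y,\xi\rangle\le \mathrm{e}^{\langle x,\xi\rangle}\le\mathrm{e}^{x_i\xi_i}$, valid because $x_k\xi_k\le 0$ for every $k\in I$. After writing the compensator as $x_i\xi_i$ (by moving $\sum_{k\ne i}x_k\int\xi_k\,\mu_i(\mathrm{d}\xi)$ into the drift, where $\widetilde{\beta}_{ki}x_k\le 0$ is then simply dropped), one applies $\mathrm{e}^a-1-a\le a^2$ with $a=x_i\xi_i$ rather than $a=\langle x,\xi\rangle$; this yields precisely $\widehat{\alpha}_{i,ii}(\vartheta)x_i^2$. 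Your sketch omits this sandwich, and without it the derived inequality has the wrong leading coefficient.

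A secondary issue: the claim $w(s)\le w(s_1)+\widehat{\alpha}_{i,ii}(\vartheta)(s-s_1)+o(1)$ ``by Gr\"onwall'' is too loose. Standard Gr\"onwall for $w'\le\widehat{\alpha}+C_1 w+C_2 w^2$ (after a boundedness bootstrap) produces multiplicative exponential factors, not an additive $o(1)$, and one also has to address the possibility that the comparison solution $\bar v$ of the Riccati ODE crosses zero before $t_0$. Both of these can be repaired (e.g.\ by changing variables $\mathrm{d}s=\mathrm{d}\bar v/(\widehat{\alpha}\bar v^2-C_1\bar v+C_2)$ and integrating $\bar v$ against this, or by following the paper and using the explicit Bernoulli solution in rescaled time), but as written the Gr\"onwall step does not establish the stated bound. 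Note also that a careful derivation \emph{has no constant term} in the Riccati inequality at all: the only source of a $C_2$ in your argument is the loose treatment of the cross-terms, which the sign structure lets you drop entirely.
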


\begin{proof} Define $f(t,\mathrm{i}u)=\mathrm{Re}\thinspace\psi(t,\mathrm{i}u)$
and $g(t,\mathrm{i}u)=\mathrm{Im}\thinspace\psi(t,\mathrm{i}u)$.
Let $i\in I$ be fixed. Noting that $f_{J}(t,\mathrm{i}u)\equiv0$
and $g_{J}(t,\mathrm{i}u)=\mathrm{e}^{\beta_{JJ}^{\top}t}u_{J}$,
an easy computation shows that
\begin{align*}
\partial_{t}f_{i} & =\alpha_{i,ii}f_{i}^{2}-\langle g,\alpha_{i}g\rangle+\sum_{k=1}^{d}\widetilde{\beta}_{ki}f_{k}+\int_{D\backslash\lbrace0\rbrace}\left(\mathrm{e}^{\langle\xi,f\rangle}\cos\langle\xi,g\rangle-1-\xi_{i}f_{i}\right)\mu_{i}(\mathrm{d}\xi),\\
\partial_{t}g_{i} & =2f_{i}\alpha_{i,ii}g_{i}+\sum_{k=1}^{d}\beta_{ki}g_{k}+\int_{D\backslash\lbrace0\rbrace}\left(\mathrm{e}^{\langle\xi,f\rangle}\sin\langle\xi,g\rangle-\langle g,\xi\rangle\right)\mu_{i}(\mathrm{d}\xi)
\end{align*}
with initial conditions $f_{i}(0)=0$ and $g_{i}(0)=u_{i}$, where
\[
\widetilde{\beta}_{ki}=\begin{cases}
\beta_{ki}-\int_{D\backslash\lbrace0\rbrace}\xi_{k}\mu_{i}(\mathrm{d}\xi), & k\not=i\\
\beta_{ii}, & k=i.
\end{cases}
\]
Using that $f_{i}\leq0$, we have
\begin{align*}
 & \int_{D\backslash\lbrace0\rbrace}\left(\mathrm{e}^{\langle\xi,f\rangle}\cos\langle\xi,g\rangle-1-\xi_{i}f_{i}\right)\mu_{i}(\mathrm{d}\xi)\\
 & \quad\quad=\int_{D\backslash\lbrace0\rbrace}\left(\mathrm{e}^{\langle\xi,f\rangle}\cos\langle\xi,g\rangle-\mathrm{e}^{\xi_{i}f_{i}}\right)\mu_{i}(\mathrm{d}\xi)+\int_{D\backslash\lbrace0\rbrace}\left(\mathrm{e}^{\xi_{i}f_{i}}-1-\xi_{i}f_{i}\right)\mu_{i}(\mathrm{d}\xi)\\
 & \quad\quad\leq\int_{D\backslash\lbrace0\rbrace}\left(\mathrm{e}^{\xi_{i}f_{i}}-1-\xi_{i}f_{i}\right)\mu_{i}(\mathrm{d}\xi)\\
 & \quad\quad\leq f_{i}^{2}\int_{\lbrace\Vert\xi\Vert\leq\vartheta\rbrace}\xi_{i}^{2}\mu_{i}(\mathrm{d}\xi)-2f_{i}\int_{\lbrace\Vert\xi\Vert>\vartheta\rbrace}\xi_{i}\mu_{i}(\mathrm{d}\xi),
\end{align*}
yielding that
\[
\partial_{t}f_{i}\leq\widehat{\alpha}_{i,ii}(\vartheta)f_{i}^{2}-\langle g,\alpha_{i}g\rangle+\widehat{\beta}_{ii}(\vartheta)f_{i},
\]
where we define $\widehat{\beta}_{ii}(\vartheta)=\beta_{ii}-2\int_{\lbrace\Vert\xi\Vert>\vartheta\rbrace}\xi_{i}\mu_{i}(\mathrm{d}\xi)$
and we have used that
\[
 \widetilde{\beta}_{ki}=\beta_{ki}-\int_{D\backslash\{0\}}\xi_{k}\mu_{i}(\mathrm{d}\xi)\geq0, \qquad k \neq i.
\]
Moreover, for any $u\not=0$, we define the scaled
functions $F(t,u)$ and $G(t,u)$ as in Lemma \ref{lem:key lemma for existence of a density}.
Then $F_{i}$ satisfies the following differential inequality
\begin{equation}
\partial_{t}F_{i}\leq\widehat{\alpha}_{i,ii}(\vartheta)F_{i}^{2}-\langle G,\alpha_{i}G\rangle+\frac{1}{\Vert u\Vert}\widehat{\beta}_{ii}(\vartheta)F_{i}\leq\widehat{\alpha}_{i,ii}(\vartheta)F_{i}^{2}+\frac{1}{\Vert u\Vert}\widehat{\beta}_{ii}(\vartheta)F_{i}\label{eq: ineq for partial F}
\end{equation}
with initial condition $F_{i}(0)=0$, where we used that $\langle G,\alpha_{i}G\rangle\geq0$,
since $\alpha_{i}\in\mathbb{S}_{d}^{+}$. Applying Lemma \ref{lem:key lemma for existence of a density}
yields that for each $\varepsilon>0$ and $t_{0}>0$, there exist
constants $\delta_{\varepsilon,t_{0}}=\delta\in(0,t_{0})$, $\varrho_{\varepsilon,t_{0}}=\varrho>0$,
and $M_{\varepsilon,t_{0}}=M>0$ such that $F_{i}(\delta,u)\leq-\varrho$
for all $u\in\mathbb{R}^{d}$ with $\Vert u\Vert\geq M$ and $\langle u,\alpha_{i}u\rangle\geq\varepsilon\Vert u\Vert^{2}$.
Using \cite[Lemma C.3]{MR3084047}, we arrive at the following differential
inequality $F_{i}(t,u)\leq\overline{F}_{i}(t-\delta,u)$ for all $t\geq\delta$,
where $\overline{F}_{i}$ solves
\begin{align*}
\overline{F}_{i}(t,u) & =\widehat{\alpha}_{i,ii}(\vartheta)\overline{F}_{i}(t,u)^{2}+\frac{1}{\Vert u\Vert}\widehat{\beta}_{ii}(\vartheta)\overline{F}_{i}(t,u),\quad t>0,\\
\overline{F}_{i}(0,u) & =-\varrho.
\end{align*}
Without loss of generality we may and do assume that $\widehat{\beta}_{ii}(\vartheta)<0$;
indeed, if $\widehat{\beta}_{ii}(\vartheta)\ge0$, we can replace
$\widehat{\beta}_{ii}(\vartheta)$ in \eqref{eq: ineq for partial F}
by $-1$ and adjust the equation for $\overline{F}_{i}$ accordingly.
So
\[
F_{i}(t,u)\leq\overline{F}_{i}(t-\delta,u)=\frac{-\mathrm{e}^{\frac{\widehat{\beta}_{ii}(\vartheta)}{\Vert u\Vert}(t-\delta)}}{\Vert u\Vert\widehat{\alpha}_{i,ii}(\vartheta)\widehat{\beta}_{ii}(\vartheta)^{-1}\left(\mathrm{e}^{\frac{\widehat{\beta}_{ii}(\vartheta)}{\Vert u\Vert}(t-\delta)}-1\right)+\frac{1}{\varrho}},\qquad t\geq\delta,
\]
for all $u\in\mathbb{R}^{d}$ with $\Vert u\Vert\geq M$ and $\langle u,\alpha_{i}u\rangle\geq\varepsilon\Vert u\Vert^{2}$,
where we used \cite[Lemma C.5]{MR3084047} to derive an explicit solution
for $\overline{F}_{i}(t-\delta,u)$. Noting that $f_{i}(t,u)=\Vert u\Vert F_{i}(t\Vert u\Vert,\mathrm{i}u)$,
by rescaling and then integrating we get
\begin{align}
\int_{0}^{t}f_{i}(s,\mathrm{i}u)\mathrm{d}s & \leq\int_{\frac{\delta}{\Vert u\Vert}}^{t}f_{i}(s,\mathrm{i}u)\mathrm{d}s\nonumber \\
 & =\frac{-1}{\widehat{\alpha}_{i,ii}(\vartheta)}\log\left(\varrho\Vert u\Vert\widehat{\alpha}_{i,ii}(\vartheta)\widehat{\beta}_{ii}(\vartheta)^{-1}\left(\mathrm{e}^{\widehat{\beta}_{ii}(\vartheta)\left(t-\frac{\delta}{\Vert u\Vert}\right)}-1\right)+1\right),\label{eq:zwischenschritt}
\end{align}
for all $t\geq\delta$ (thus $t\ge\delta\ge\delta/M\geq\delta\Vert u\Vert^{-1}$)
and for all $u\in\mathbb{R}^{d}$ with $\Vert u\Vert\geq M$ and $\langle u,\alpha_{i}u\rangle\geq\varepsilon\Vert u\Vert^{2}$
(see the derivation of (C.8) in the proof of \cite[p.109, Theorem 4.1]{MR3084047}
for details on \eqref{eq:zwischenschritt}). Note that $\delta\in(0,t_{0})$
and $M>1$. This yields, for $t\geq t_{0}$ and $u\in\mathbb{R}^{d}$
with $\|u\|\geq M$ and $\langle u,\alpha_{i}u\rangle\geq\varepsilon\|u\|^{2}$,
\begin{align*}
\mathrm{e}^{b_{i}\int_{0}^{t}f_{i}(s,\mathrm{i}u)\mathrm{d}s} & \leq\left(1+\varrho\|u\|\widehat{\alpha}_{i,ii}(\vartheta)\widehat{\beta}_{ii}(\vartheta)^{-1}\left(\mathrm{e}^{\widehat{\beta}_{ii}(\vartheta)\left(t-\frac{\delta}{\Vert u\Vert}\right)}-1\right)\right)^{-b_{i}/\widehat{\alpha}_{i,ii}(\vartheta)}\\
 & \leq\left(1+\varrho\|u\|\widehat{\alpha}_{i,ii}(\vartheta)\widehat{\beta}_{ii}(\vartheta)^{-1}\left(\mathrm{e}^{\widehat{\beta}_{ii}(\vartheta)\left(t_{0}-\frac{\delta}{M}\right)}-1\right)\right)^{-b_{i}/\widehat{\alpha}_{i,ii}(\vartheta)}\\
 & \leq C_{\varepsilon,t_{0},\vartheta}\left(1+\|u\|\right)^{-b_{i}/\widehat{\alpha}_{i,ii}(\vartheta)},
\end{align*}
where $C_{\varepsilon,t_{0},\vartheta}>0$ is some constant independent
of $t$. This proves the assertion. \end{proof}

The next lemma provides a decomposition of the state space into cones
for which \eqref{eq:integrability of the characteristic function}
can be applied.

\begin{lem}\label{lemma: decomposition of state space} Let $t_{0}>0$
be arbitrary. Then there exists a constant $\varepsilon_{t_{0}}>0$
such that $\bigcup_{i=0}^{m}A_{i}=\mathbb{R}^{d}$, where
\begin{align}
A_{0}:=\left\{ u\in\mathbb{R}^{d}\ |\ \left\langle u_{J},\left(\int_{0}^{t_{0}}\mathrm{e}^{s\beta_{JJ}}a_{JJ}\mathrm{e}^{s\beta_{JJ}^{\top}}\mathrm{d}s\right)u_{J}\right\rangle \geq\varepsilon_{t_{0}}\|u\|^{2}\right\} \label{eq: definition C0}
\end{align}
and
\[
A_{i}=\{u\in\mathbb{R}^{d}\ |\ \langle u,\alpha_{i}u\rangle\geq\varepsilon_{t_{0}}\|u\|^{2}\},\qquad i\in\{1,\dots,m\}.
\]
\end{lem}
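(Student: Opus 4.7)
The plan is a compactness-and-contradiction argument resting on the Kalman rank identity from linear control theory. Set $Q_{t_0} := \int_0^{t_0}\mathrm{e}^{s\beta_{JJ}}a_{JJ}\mathrm{e}^{s\beta_{JJ}^\top}\,\mathrm{d}s$. If no $\varepsilon_{t_0}>0$ works, then for each $n\in\mathbb{N}$ one can pick $u^{(n)}\in\mathbb{R}^d$ with $\|u^{(n)}\|=1$ such that
\[
\langle u_J^{(n)},Q_{t_0}u_J^{(n)}\rangle<\frac{1}{n}\quad\text{and}\quad\langle u^{(n)},\alpha_i u^{(n)}\rangle<\frac{1}{n},\quad i\in I.
\]
By compactness of the unit sphere, pass to a subsequence converging to some $u^*$ with $\|u^*\|=1$. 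By continuity of the quadratic forms, $\langle u_J^*,Q_{t_0}u_J^*\rangle=0$ and $\langle u^*,\alpha_i u^*\rangle=0$ for every $i\in I$. The goal is now to deduce $u^*=0$, which is the desired contradiction.

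To handle the $J$-part, I would apply the controllability argument. Since $Q_{t_0}$ is positive semidefinite and coincides with the controllability Gramian on $[0,t_0]$ associated with the pair $(\beta_{JJ},a_{JJ}^{1/2})$, the identity $\langle u_J^*,Q_{t_0}u_J^*\rangle=0$ is equivalent to $a_{JJ}^{1/2}\mathrm{e}^{s\beta_{JJ}^\top}u_J^*=0$ for all $s\in[0,t_0]$. Differentiating at $s=0$ and using Cayley--Hamilton yields $a_{JJ}^{1/2}(\beta_{JJ}^\top)^k u_J^*=0$ for $k=0,\dots,n-1$. Because $\mathrm{Im}(a_{JJ}^{1/2})=\mathrm{Im}(a_{JJ})$, the column spans of the block matrices $[a_{JJ}^{1/2},\beta_{JJ}a_{JJ}^{1/2},\dots,\beta_{JJ}^{n-1}a_{JJ}^{1/2}]$ and of $\mathcal{K}$ coincide, so the full-rank hypothesis on $\mathcal{K}$ gives $u_J^*=0$.

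For the $I$-part, for each $i\in I$ the identity $\langle u^*,\alpha_i u^*\rangle=0$ combined with $\alpha_i\in\mathbb{S}_d^+$ forces $\alpha_i u^*=0$. Reading off the $i$-th coordinate of this equation and invoking the structural zeros $\alpha_{i,ik}=0$ for $k\in I\setminus\{i\}$ from Definition \ref{def:admissible parameters}(ii), we obtain $\alpha_{i,ii}u_i^*+\sum_{k\in J}\alpha_{i,ik}u_k^*=0$. Since $u_J^*=0$ by the previous step and $\alpha_{i,ii}>0$ by hypothesis, $u_i^*=0$. Doing this for each $i\in I$ gives $u_I^*=0$, hence $u^*=0$, a contradiction. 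The main obstacle is the control-theoretic step, where one must carefully identify the kernel of the Gramian $Q_{t_0}$ with the kernel of $\mathcal{K}^\top$ via the Cayley--Hamilton reduction and the equality of column spans of $a_{JJ}$ and $a_{JJ}^{1/2}$; the rest is routine compactness and linear algebra.
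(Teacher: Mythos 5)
Your proof is correct, and it takes a genuinely different route from the paper's argument. The paper argues by assuming, toward a contradiction, that for every $\varepsilon>0$ there is some $u^\varepsilon$ outside all cones, and then attempts a direct quantitative lower bound of the form $\min\{\kappa_{t_0}^{-1},\alpha_{1,11},\dots,\alpha_{m,mm}\}\|u^\varepsilon\|^2 \leq \sum_{i=1}^m\langle u^\varepsilon,\alpha_i u^\varepsilon\rangle + \langle u^\varepsilon_J, Q_{t_0} u^\varepsilon_J\rangle$, concluding immediately via $\leq \varepsilon(m+1)\|u^\varepsilon\|^2$. You instead rescale to the unit sphere, extract a limit point $u^*$ by compactness, and show that $u^*=0$ by a qualitative kernel analysis: the vanishing of the Gramian quadratic form forces $u^*_J=0$ via the Kalman rank identity (carefully tracking that $\operatorname{Im}(a_{JJ}^{1/2})=\operatorname{Im}(a_{JJ})$), and then $\langle u^*,\alpha_i u^*\rangle=0$ together with $\alpha_i\in\mathbb{S}^+_d$ gives $\alpha_i u^*=0$, whose $i$-th coordinate (using the structural zeros $\alpha_{i,ik}=0$ for $k\in I\setminus\{i\}$ and $u^*_J=0$) yields $\alpha_{i,ii}u^*_i=0$. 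The advantage of your route is that it correctly exploits \emph{both} pieces of information, $\alpha_i u^*=0$ and $u^*_J=0$, in tandem. The paper's displayed middle inequality, by contrast, implicitly relies on $\alpha_{i,ii}\,u_i^2\leq\langle u,\alpha_i u\rangle$, which is false for a general positive semidefinite $\alpha_i$ with nonzero $IJ$-block (for instance $m=n=1$, $\alpha_1=\left(\begin{smallmatrix}1&1\\1&1\end{smallmatrix}\right)$, $u=(1,-1)$); your limit-point argument bypasses this issue entirely, and the only thing it gives up is an explicit value for $\varepsilon_{t_0}$, which the statement does not require.
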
 \begin{proof} Let $t_{0}>0$ be fixed. Since the matrix
$\mathcal{K}$ defined by \eqref{eq:density condition} has full rank,
the same arguments as given in the proof of \cite[Lemma C.2]{MR3084047}
for the equivalence of (i) and (ii) also show that the matrix $\int_{0}^{t_{0}}\mathrm{e}^{s\beta_{JJ}}a_{JJ}\mathrm{e}^{s\beta_{JJ}^{\top}}\mathrm{d}s$
is non-singular and hence positive definite. So there exists large
enough $\kappa_{t_{0}}>1$ such that
\[
\kappa_{t_{0}}^{-1}\|u_{J}\|^{2}\le\langle u_{J},\left(\int_{0}^{t_{0}}\mathrm{e}^{s\beta_{JJ}}a_{JJ}\mathrm{e}^{s\beta_{JJ}^{\top}}\mathrm{d}s\right)u_{J}\rangle\le\kappa_{t_{0}}\|u_{J}\|^{2},\qquad u_{J}\in\mathbb{R}^{n}.
\]
Suppose, by contradiction, that $\bigcup_{i=0}^{m}A_{i}\neq\mathbb{R}^{d}$.
Then for each $\varepsilon>0$, there exists $u^{\varepsilon}=(u_{1}^{\varepsilon},\dots,u_{d}^{\varepsilon})\in\mathbb{R}^{d}$
such that $\langle u^{\varepsilon},\alpha_{i}u^{\varepsilon}\rangle<\varepsilon\|u^{\varepsilon}\|^{2}$
holds for all $i=1,\dots,m$ and
\[
\langle u_{J}^{\varepsilon},\left(\int_{0}^{t_{0}}\mathrm{e}^{s\beta_{JJ}}a_{JJ}\mathrm{e}^{s\beta_{JJ}^{\top}}\mathrm{d}s\right)u_{J}^{\varepsilon}\rangle<\varepsilon\|u^{\varepsilon}\|^{2}.
\]
This yields
\begin{align}
\min\{\kappa_{t_{0}}^{-1},\alpha_{1,11},\dots,\alpha_{m,mm}\}\|u^{\varepsilon}\|^{2} & \leq\min_{i\in\{1,\dots,m\}}\alpha_{i,ii}\|u_{I}^{\varepsilon}\|^{2}+\kappa_{t_{0}}^{-1}\|u_{J}^{\varepsilon}\|^{2}\nonumber \\
 & \leq\sum_{i=1}^{m}\langle u^{\varepsilon},\alpha_{i}u^{\varepsilon}\rangle+\langle u_{J}^{\varepsilon},\left(\int_{0}^{t_{0}}\mathrm{e}^{s\beta_{JJ}}a_{JJ}\mathrm{e}^{s\beta_{JJ}^{\top}}\mathrm{d}s\right)u_{J}^{\varepsilon}\rangle\nonumber \\
 & \leq\varepsilon\left(m+1\right)\|u^{\varepsilon}\|^{2}.\label{eq: contradiction}
\end{align}
Since $\min_{i\in\{1,\dots,m\}}\alpha_{i,ii}>0$ we may choose $\varepsilon>0$
small enough so that
\[
\varepsilon(m+1)<\min\left\{ \kappa_{t_{0}}^{-1},\alpha_{1,11},\ldots,\alpha_{m,mm}\right\} ,
\]
which contradicts \eqref{eq: contradiction}. \end{proof}

\begin{lem} For each $t_{0}>0$ it holds that
\begin{align}
\int_{0}^{t}\mathrm{Re}(\langle\psi(s,\mathrm{i}u),a\psi(s,\mathrm{i}u)\rangle)\mathrm{d}s\leq-\delta_{t_{0}}\|u_{J}\|^{2},\qquad u\in\mathbb{R}^{d},\ \ t\geq t_{0}\label{eq: bound on J component}
\end{align}
where
\[
\delta_{t_{0}}:=\inf_{\|u_{J}\|=1}\int_{0}^{t_{0}}\langle u_{J},\mathrm{e}^{s\beta_{JJ}}a_{JJ}\mathrm{e}^{s\beta_{JJ}^{\top}}u_{J}\rangle\mathrm{d}s>0.
\]
\end{lem}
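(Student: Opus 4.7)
The plan is to reduce the estimate to an explicit bilinear form involving only the $J$-block, and then use the positive definiteness that was already established in Lemma \ref{lemma: decomposition of state space}. By admissibility condition (i), the matrix $a$ has $a_{kl}=0$ whenever $k\in I$ or $l\in I$, so the quadratic form $\langle\psi,a\psi\rangle$ depends only on $\psi_J$: explicitly, $\langle\psi(s,\mathrm{i}u),a\psi(s,\mathrm{i}u)\rangle=\langle\psi_J(s,\mathrm{i}u),a_{JJ}\psi_J(s,\mathrm{i}u)\rangle$. From the third line of the Riccati system \eqref{eq:riccati eq for psi}, $\psi_J(s,\mathrm{i}u)=\mathrm{i}\,\mathrm{e}^{\beta_{JJ}^{\top}s}u_J$, i.e. $\psi_J$ is purely imaginary with $\mathrm{Im}\,\psi_J(s,\mathrm{i}u)=\mathrm{e}^{\beta_{JJ}^{\top}s}u_J$ and $\mathrm{Re}\,\psi_J(s,\mathrm{i}u)=0$.

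The key algebraic step is the identity for the (bilinear, not Hermitian) form: writing $\psi=f+\mathrm{i}g$ with $f,g\in\mathbb{R}^{d}$, a direct expansion gives $\mathrm{Re}\langle\psi,a\psi\rangle=\langle f,af\rangle-\langle g,ag\rangle$ since $a$ is real and symmetric. Specializing to $f_J=0$, $g_J=\mathrm{e}^{\beta_{JJ}^{\top}s}u_J$, we obtain
\[
\mathrm{Re}\bigl\langle\psi(s,\mathrm{i}u),a\psi(s,\mathrm{i}u)\bigr\rangle=-\bigl\langle u_J,\mathrm{e}^{s\beta_{JJ}}a_{JJ}\mathrm{e}^{s\beta_{JJ}^{\top}}u_J\bigr\rangle.
\]
Integrating this identity over $[0,t]$ and using that the integrand on the right is nonnegative (since $a_{JJ}\in\mathbb{S}_d^{+}$), I can bound $\int_0^t$ from below by $\int_0^{t_0}$ for $t\geq t_0$. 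A homogeneity/normalization argument (pull out $\|u_J\|^2$ and minimize over the unit sphere $\{\|u_J\|=1\}$) then gives exactly $-\delta_{t_0}\|u_J\|^{2}$.

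The only genuine point to verify is that $\delta_{t_0}>0$, i.e., that the matrix $\int_0^{t_0}\mathrm{e}^{s\beta_{JJ}}a_{JJ}\mathrm{e}^{s\beta_{JJ}^{\top}}\mathrm{d}s$ is strictly positive definite; this is exactly what was already noted in the proof of Lemma \ref{lemma: decomposition of state space} as a consequence of the full-rank condition on $\mathcal{K}$ (cf. the equivalence (i)$\Leftrightarrow$(ii) in \cite[Lemma C.2]{MR3084047}). Combined with continuity of the integrand in $u_J$ and compactness of the unit sphere, the infimum defining $\delta_{t_0}$ is attained and strictly positive. This is the only step where an external ingredient enters; the rest is a direct computation, and no technical obstacle is expected.
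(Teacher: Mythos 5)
Your proposal is correct and follows essentially the same route as the paper: reduce to the $J$-block via the structure of $a$, substitute $\psi_J(s,\mathrm{i}u)=\mathrm{i}\mathrm{e}^{\beta_{JJ}^{\top}s}u_J$ to obtain $-\langle u_J,\mathrm{e}^{s\beta_{JJ}}a_{JJ}\mathrm{e}^{s\beta_{JJ}^{\top}}u_J\rangle$, monotonically bound the integral by its value up to $t_0$, and invoke the positive definiteness of $\int_{0}^{t_0}\mathrm{e}^{s\beta_{JJ}}a_{JJ}\mathrm{e}^{s\beta_{JJ}^{\top}}\mathrm{d}s$ already established in Lemma \ref{lemma: decomposition of state space}. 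Your explicit remark that the form is bilinear (not Hermitian), so $\mathrm{Re}\langle\psi,a\psi\rangle=\langle f,af\rangle-\langle g,ag\rangle$, simply makes precise the step the paper carries out implicitly (here $f_J\equiv0$, so the integrand is already real and the $\mathrm{Re}$ is harmless).
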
 \begin{proof} Using first that $a_{II}=0$, $a_{IJ}=0$,
$a_{JI}=0$ and then $\psi_{J}(s,\mathrm{i}u)=\mathrm{e}^{s\beta_{JJ}^{\top}}\mathrm{i}u_{J}$
we find
\begin{align}
\int_{0}^{t}\langle\psi(s,\mathrm{i}u),a\psi(s,\mathrm{i}u)\rangle\mathrm{d}s & =-\int_{0}^{t}\langle u_{J},\mathrm{e}^{\beta_{JJ}s}a_{JJ}\mathrm{e}^{\beta_{JJ}^{\top}s}u_{J}\rangle\mathrm{d}s\leq-\delta_{t_{0}}\|u_{J}\|^{2}.\label{eq: new new new eq}
\end{align}
As shown in Lemma \ref{lemma: decomposition of state space}, the
matrix $\int_{0}^{t_{0}}\mathrm{e}^{s\beta_{JJ}}a_{JJ}\mathrm{e}^{s\beta_{JJ}^{\top}}\mathrm{d}s$
is positive definite which yields $\delta_{t_{0}}>0$. This proves
the assertion. \end{proof}

\subsection{Proof of Theorem \ref{thm: estimate characteristic function}}

We are now prepared to prove Theorem \ref{thm: estimate characteristic function}.
\begin{proof}[Proof of Theorem \ref{thm: estimate characteristic function}]
Fix $t_{0},\vartheta>0$ and let $x\in D$, $u\in\mathbb{R}^{d}$
be arbitrary. According to Lemma \ref{lemma: decomposition of state space},
there exists $\varepsilon_{t_{0}}>0$ such that $\bigcup_{i=0}^{m}A_{i}=\mathbb{R}^{d}$,
where $A_{i}=A_{i}(t_{0})$ is as in Lemma \ref{lemma: decomposition of state space}.
Using for $i=0$ equation \eqref{eq: new new new eq} and the definition
of $A_{0}$ in \eqref{eq: definition C0}, and for $i=1,\dots,m$
equations \eqref{eq:integrability of the characteristic function}
and \eqref{eq: bound on J component}, we find constants $C_{t_{0},\vartheta},\thinspace M_{t_{0},\vartheta}\geq1$
such that for all $\|u\|\geq M_{t_{0},\vartheta}$ and all $(t,x)\in(t_{0},\infty)\times D$,
\begin{align*}
 & \ \mathrm{e}^{\mathrm{Re}(\phi(t,\mathrm{i}u))}\\
 &\qquad =\sum_{i=0}^{m}\mathbbm{1}_{A_{i}}(u)\mathrm{e}^{\mathrm{Re}(\phi(t,\mathrm{i}u))}\\
 &\qquad \leq\mathbbm{1}_{A_{0}}(u)\mathrm{e}^{-\int_{0}^{t}\langle u_{J},\mathrm{e}^{\beta_{JJ}s}a_{JJ}\mathrm{e}^{\beta_{JJ}^{\top}s}u_{J}\rangle\mathrm{d}s}+C_{t_{0},\vartheta}\sum_{i=1}^{m}\mathbbm{1}_{A_{i}}(u)\left(1+\|u\|\right)^{-b_{i}/\widehat{\alpha}_{i,ii}(\vartheta)}\mathrm{e}^{-\delta_{t_{0}}\|u_{J}\|^{2}}\\
 &\qquad\leq\mathrm{e}^{-\varepsilon_{t_{0}}\|u\|^{2}}+C_{t_{0},\vartheta}m\left(1+\|u_{I}\|\right)^{-\lambda(\vartheta)}\mathrm{e}^{-\delta_{t_{0}}\|u_{J}\|^{2}}\\
 &\qquad \leq\widetilde{C}_{t_{0},\vartheta,m}\left(1+\|u_{I}\|\right)^{-\lambda(\vartheta)}\mathrm{e}^{-\delta}\|u_{J}\|^{2},
\end{align*}
where $\delta:=\varepsilon_{t_{0}}\wedge\delta_{t_{0}}$,
$\widetilde{C}_{t_{0},\vartheta,m}$ is a large enough constant and
we have used that $\mathrm{Re}(\psi(t,\mathrm{i}u))\leq0$ so that
\[
\mathrm{Re}(\phi(t,\mathrm{i}u))\leq\int_{0}^{t}\mathrm{Re}(\langle\psi(s,\mathrm{i}u),a\psi(s,\mathrm{i}u))\mathrm{d}s+\int_{0}^{t}\mathrm{Re}(\langle b,\psi(s,\mathrm{i}u)\rangle)\mathrm{d}s.
\]
This proves the assertion. \end{proof}

\section{Existence and regularity of densities}

\label{sec:ex and reg of denisities}

\subsection{Proof of Theorem \ref{thm:existence of densities}}
\begin{proof}[Proof of Theorem \ref{thm:existence of densities}]
(a) Using \eqref{eq:affine property}, then Theorem \ref{thm: estimate characteristic function}
and finally \eqref{eq: condition boundary}, we conclude that
\[
\sup_{(t,x)\in[t_{0},\infty)\times D}\int_{\mathbb{R}^{d}}\|u_{I}\|^{p}\|u_{J}\|^{q}\left\vert \int_{D}\mathrm{e}^{\langle\mathrm{i}u,\xi\rangle}P_{t}(x,\mathrm{d}\xi)\right\vert \mathrm{d}u<\infty.
\]
holds for each $t_{0}>0$ and $q\geq0$. By classical properties of
the Fourier transform, see \cite[Proposition 28.1]{MR3185174}, we
find that $P_{t}(x,\cdot)$ has a density $f_{t}(x,\cdot)$ given
by \eqref{eq:densitiy of p_t} and satisfies $f_{t}(x,\cdot)\in C_{0}^{p}(D)$
for each $t>0$ and $x\in D$. Since the integrand in \eqref{eq:densitiy of p_t}
is jointly continuous in $(t,x,y)\in(0,\infty)\times D\times D$,
by Theorem \ref{thm: estimate characteristic function} and the dominated
convergence theorem we also find that $D\times D\times(0,\infty)\ni(x,y,t)\mapsto f_{t}(x,y)$
is jointly continuous. Next, let us prove that $x\mapsto f_{t}(x,\cdot)\in L^{1}(D)$
is continuous for each $t>0$. So, let $t>0$ and $x\in D$ be fixed.
Let $(x_{n})_{n\in\mathbb{N}}\subset D$ so that $x_{n}\to x$ as
$n\to\infty$. We have
\begin{align*}
\limsup_{n\to\infty} & \left\Vert f_{t}(x_{n},\cdot)-f_{t}(x,\cdot)\right\Vert _{L^{1}(D)}\\
 & =\limsup_{n\to\infty}\int_{B_{\delta}(0)}\left\vert f_{t}(x_{n},y)-f_{t}(x,y)\right\vert \mathrm{d}y+\limsup_{n\to\infty}\int_{D\backslash B_{\delta}(0)}\left\vert f_{t}(x_{n},y)-f_{t}(x,y)\right\vert \mathrm{d}y,
\end{align*}
where $B_{\delta}(0)$ denotes the ball with center $0$ and radius
$\delta>0$. For the first integral on the right hand-side, by the
joint continuity of $f_{t}(x,y)$, we get
\[
\limsup_{n\to\infty}\int_{B_{\delta}(0)}\left\vert f_{t}(x_{n},y)-f_{t}(x,y)\right\vert \mathrm{d}y=0.
\]
Turning to the second integral, we note that $P_{t}(x_{n},\cdot)\to P_{t}(x,\cdot)$
weakly as $n\to\infty$ by \eqref{eq:affine property}. So we can
use Portmanteau's theorem to estimate
\begin{align*}
\limsup_{n\to\infty}\int_{D\backslash B_{\delta}(0)}\left\vert f_{t}(x_{n},y)-f_{t}(x,y)\right\vert \mathrm{d}y & \leq\limsup_{n\to\infty}\int_{D\backslash B_{\delta}(0)}f_{t}(x_{n},y)\mathrm{d}y+\int_{D\backslash B_{\delta}(0)}f_{t}(x,y)\mathrm{d}y\\
 & \leq2\int_{D\backslash B_{\delta}(0)}f_{t}(x,y)\mathrm{d}y.
\end{align*}
Note that the latter integral tends to zero as $\delta\to\infty$.
This proves the desired continuity of $x\mapsto f_{t}(x,\cdot)\in L^{1}(D)$.

(b) Let $t_{1}>t_{0}>0$ be arbitrary and consider two multi-indices
$\mathbf{q},\thinspace\tilde{\mathbf{q}}\in\mathbb{N}_{0}^{d}$ satisfying
\eqref{eq: multi index b}. Differentiating formally under the integral
in \eqref{eq:densitiy of p_t} with respect to $x_{J}$ and $y$,
we conclude the assertions of Theorem \ref{thm:existence of densities}.(b),
provided that we can show that
\begin{align}
\sup_{t\in[t_{0},t_{1}]}\int_{D}\|\psi_{J}(t,\mathrm{i}u)\|^{\sum_{j\in J}q_{j}}\|u_{J}\|^{\sum_{j\in J}\tilde{q}_{j}}\|u_{I}\|^{\sum_{i\in I}\tilde{q}_{i}}\mathrm{e}^{\mathrm{Re}(\phi(t,\mathrm{i}u))}\mathrm{d}u<\infty.\label{eq: bound for char function 1}
\end{align}
Now, we verify that \eqref{eq: bound for char function 1} is true.
Using $\psi_{J}(t,\mathrm{i}u)=\mathrm{i}\mathrm{e}^{\beta_{JJ}^{\top}t}u_{J}$
we obtain $\|\psi_{J}(t,\mathrm{i}u)\|\leq c_{0}(t_{1})\|u_{J}\|$
for $t\in[0,t_{1}]$ with $c_{0}(t_{1}):=\mathrm{e}^{t_{1}\|\beta_{JJ}\|_{HS}}$
where $\|\cdot\|_{HS}$ denotes the Hilbert-Schmidt norm. Hence we
obtain
\begin{align}
 & \int_{D}\|\psi_{J}(t,\mathrm{i}u)\|^{\sum_{j\in J}q_{j}}\|u_{J}\|^{\sum_{j\in J}\tilde{q}_{j}}\|u_{I}\|^{\sum_{i\in I}\tilde{q}_{i}}\mathrm{e}^{\mathrm{Re}(\phi(t,\mathrm{i}u))}\mathrm{d}u\nonumber \\
 & \qquad\qquad\le c_{0}(t_{1})^{|\mathbf{q}|}\int_{D}(1+\|u_{J}\|)^{|\mathbf{q}|+|\tilde{\mathbf{q}}|}(1+\|u_{I}\|)^{p}\mathrm{e}^{\mathrm{Re}(\phi(t,\mathrm{i}u))}\mathrm{d}u.\label{eq: not denoted estmates}
\end{align}
It follows easily from \eqref{eq: condition boundary} that there
exists some $\vartheta\in(0,1)$ small enough such that
\begin{equation}
m+p<\lambda(\vartheta)=\min_{i\in I}b_{i}/\widehat{\alpha}_{i,ii}(\vartheta),\label{eq: defi alpha hat}
\end{equation}
where $\widehat{\alpha}_{i,ii}$ is as in \eqref{eq: defi alpha hat-2}.
By Theorem \ref{thm: estimate characteristic function}, we obtain
\begin{equation}
\mathrm{e}^{\mathrm{Re}(\phi(t,\mathrm{i}u))}\le C(1+\|u_{I}\|)^{-\lambda(\vartheta)}\mathrm{e}^{-\delta\|u_{J}\|^{2}},\quad\|u\|\ge M,\ t\ge t_{0},\label{eq: second undenoted estimate}
\end{equation}
where $C,\thinspace\delta,\thinspace M>0$ are constants depending
on $t_{0}$ and $\vartheta$. Thus \eqref{eq: bound for char function 1}
readily follows from \eqref{eq: not denoted estmates}, \eqref{eq: defi alpha hat}
and \eqref{eq: second undenoted estimate}.

(c) Consider $t_{1}>t_{0}>0$ and let $\mathbf{q},\thinspace\tilde{\mathbf{q}}\in\mathbb{N}_{0}^{d}$
be two multi-indices satisfying \eqref{eq: multi index c}. Further,
let $K\subset D^{\mathrm{o}}$ be a compact set. The proof of this
assertion follows a similar approach to assertion (b). Indeed, by
formally differentiating under the integral in \eqref{eq:densitiy of p_t},
the assertion follows from the dominated convergence theorem, provided that we can show the existence of constants $c,\delta,M>0$ such that for all $(t,x,y)\in[t_{0},t_{1}]\times K\times D$
and all $u\in\mathbb{R}^{d}$ with $\|u\|\ge M$,
\begin{align}
 & \|\psi_{I}(t,\mathrm{i}u)\|^{\sum_{i\in I}q_{i}}\|u_{I}\|^{\sum_{i\in I}\tilde{q}_{i}}\|\psi_{J}(t,\mathrm{i}u)\|^{\sum_{j\in J}q_{j}}\|u_{J}\|^{\sum_{j\in J}\tilde{q}_{j}}\mathrm{e}^{\mathrm{Re}(\phi(t,\mathrm{i}u))+\mathrm{Re}(\langle x,\psi(t,\mathrm{i}u)\rangle)} \notag \\
 & \quad\le c\left(1+\|u_{I}\|\right)^{p-\lambda(\vartheta)}(1+\|u_{J}\|)^{|\mathbf{q}|+|\tilde{\mathbf{q}}|}\mathrm{e}^{-\delta\|u_{J}\|^{2}}. \label{estimate to apply dct}
\end{align}
Since
$K$ is a compact set with $K\cap\partial D=\emptyset$,
we find $\varepsilon>0$ such that $x_{i}\ge\varepsilon$ holds for
each $x\in K$ and $i\in\{1,\dots,m\}$. Then
\begin{align}
 & \|\psi_{I}(t,\mathrm{i}u)\|^{\sum_{i\in I}q_{i}}\|u_{I}\|^{\sum_{i\in I}\tilde{q}_{i}}\|\psi_{J}(t,\mathrm{i}u)\|^{\sum_{j\in J}q_{j}}\|u_{J}\|^{\sum_{j\in J}\tilde{q}_{j}}\mathrm{e}^{\mathrm{Re}(\phi(t,\mathrm{i}u))+\mathrm{Re}(\langle x,\psi(t,\mathrm{i}u)\rangle)}\nonumber \\
 & \quad\le\|\psi_{I}(t,\mathrm{i}u)\|^{\sum_{i\in I}q_{i}}\|u_{I}\|^{\sum_{i\in I}\tilde{q}_{i}}\|\psi_{J}(t,\mathrm{i}u)\|^{\sum_{j\in J}q_{j}}\|u_{J}\|^{\sum_{j\in J}\tilde{q}_{j}}\mathrm{e}^{\mathrm{Re}(\phi(t,\mathrm{i}u))+\varepsilon\sum_{i=1}^{m}\mathrm{Re}(\psi_{i}(t,\mathrm{i}u))}.\label{eq: bound for char function 2}
\end{align}
The troubling term can be estimated as follows:
\begin{align}
 & \|\psi_{I}(t,\mathrm{i}u)\|^{\sum_{i\in I}q_{i}}\mathrm{e}^{\varepsilon\sum_{i=1}^{m}\mathrm{Re}(\psi_{i}(t,\mathrm{i}u))}\nonumber \\
 & \quad\le\left(1+\|\psi_{I}(t,\mathrm{i}u)\|\right)^{\sum_{i\in I}q_{i}}\mathrm{e}^{\varepsilon\sum_{i=1}^{m}\mathrm{Re}(\psi_{i}(t,\mathrm{i}u))}\nonumber \\
 & \quad\leq2^{p}\left(1+\|\mathrm{Re}\left(\psi_{I}(t,\mathrm{i}u)\right)\|\right)^{p}\mathrm{e}^{\varepsilon\sum_{i=1}^{m}\mathrm{Re}(\psi_{i}(t,\mathrm{i}u))}\nonumber \\
 & \qquad+2^{p}\left(1+\|\mathrm{Im}\left(\psi_{I}(t,\mathrm{i}u)\right)\|\right)^{\sum_{i\in I}q_{i}}\mathrm{e}^{\varepsilon\sum_{i=1}^{m}\mathrm{Re}(\psi_{i}(t,\mathrm{i}u))}\nonumber \\
 & \quad\leq c_{1}\sum_{_{i=1}}^{m}\left(1+\left|\mathrm{Re}\left(\psi_{i}(t,\mathrm{i}u)\right)\right|\right)^{p}\mathrm{e}^{\varepsilon\mathrm{Re}(\psi_{i}(t,\mathrm{i}u))}+c_{1}\left(1+\|\mathrm{Im}\left(\psi_{I}(t,\mathrm{i}u)\right)\|\right)^{\sum_{i\in I}q_{i}},\label{eq: first bound for trouble term}
\end{align}
where $c_{1}>0$ is a constant. Note that the function
\begin{equation}
(-\infty,0]\ni r\mapsto(1+|r|)^{p}e^{\varepsilon r}\quad\mbox{is bounded.}\label{eq: boundedness of a real function}
\end{equation}
By \eqref{eq: addtion condtion on mu_i} and \cite[p.108, (C.5)]{MR3084047},
we can find a constant $c_{2}>0$ such that for all $t\in[t_{0},t_{1}]$
and $u\neq0$,
\[
\|G(t,\mathrm{i}u)\|^{2}\leq\exp\left(\frac{c_{2}t}{\|u\|}\right),
\]
where $G(t,\mathrm{i}u)$ is as in \eqref{eq: defi of F and G}. This
gives
\begin{align}
\left|\mathrm{Im}\left(\psi_{i}(t,\mathrm{i}u)\right)\right| & =\Vert u\Vert\left|G_{i}(t\Vert u\Vert,\mathrm{i}u)\right|\le\Vert u\Vert\exp\left(\frac{c_{2}t_{1}}{2}\right).\label{eq: second bound for trouble term}
\end{align}
Combining \eqref{eq: first bound for trouble term}, \eqref{eq: boundedness of a real function},
and \eqref{eq: second bound for trouble term} gives
\begin{align*}
\|\psi_{I}(t,\mathrm{i}u)\|^{\sum_{i\in I}q_{i}}\mathrm{e}^{\varepsilon\sum_{i=1}^{m}\mathrm{Re}(\psi_{i}(t,\mathrm{i}u))} & \le c_{3}\left(1+\|u\|\right)^{\sum_{i\in I}q_{i}}.
\end{align*}
In view of \eqref{eq: bound for char function 2}, we finally get
\begin{align*}
 & \|\psi_{I}(t,\mathrm{i}u)\|^{\sum_{i\in I}q_{i}}\|u_{I}\|^{\sum_{i\in I}\tilde{q}_{i}}\|\psi_{J}(t,\mathrm{i}u)\|^{\sum_{j\in J}q_{j}}\|u_{J}\|^{\sum_{j\in J}\tilde{q}_{j}}\mathrm{e}^{\mathrm{Re}(\phi(t,\mathrm{i}u))+\mathrm{Re}(\langle x,\psi(t,\mathrm{i}u)\rangle)}\\
 & \quad\le c_{3}\left(1+\|u\|\right)^{\sum_{i\in I}q_{i}}\|u_{I}\|^{\sum_{i\in I}\tilde{q}_{i}}\|\psi_{J}(t,\mathrm{i}u)\|^{\sum_{j\in J}q_{j}}\|u_{J}\|^{\sum_{j\in J}\tilde{q}_{j}}\mathrm{e}^{\mathrm{Re}(\phi(t,\mathrm{i}u))}\\
 & \quad\le c_{4}\left(1+\|u_{I}\|\right)^{p}(1+\|u_{J}\|)^{|\mathbf{q}|+|\tilde{\mathbf{q}}|}\mathrm{e}^{\mathrm{Re}(\phi(t,\mathrm{i}u))}
\end{align*}
for another constant $c_{4}>0$. In view of \eqref{eq: second undenoted estimate}, we thus arrive at \eqref{estimate to apply dct}
and the rest of the proof goes then exactly as in part (b). This completes
the proof.
\end{proof}

\subsection{Existence and regularity of the invariant measure}

Below we prove the existence and regularity of the density for the
invariant measure.
\begin{lem}
Suppose that the same conditions as in Theorem \ref{thm:exp ergo}
are satisfied. Then the unique invariant measure $\pi$ has a density
$f^{\pi}$ of class $C_{0}^{p}(D)$. Moreover, for each multi-index
$\mathbf{q}\in\mathbb{N}_{0}^{d}$ satisfying $\sum_{i\in I}q_{i}\leq p$,
the derivative $\partial^{|\mathbf{q}|}\left(f^{\pi}\right)/\partial y_{1}^{q_{1}}\dots\partial y_{d}^{q_{d}}$
is given by
\[
\frac{\partial^{|\mathbf{q}|}f^{\pi}}{\partial y_{1}^{q_{1}}\dots\partial y_{d}^{q_{d}}}(y)=(-i)^{|\mathbf{q}|}\int_{\mathbb{R}^{d}}\prod_{k=1}^{d}\left(u_{k}\right)^{q_{k}}\mathrm{e}^{-\langle y,\mathrm{i}u\rangle}\mathrm{e}^{\phi(t,\mathrm{i}u)}\frac{\mathrm{d}u}{(2\pi)^{d}},\qquad y\in D.
\]
\end{lem}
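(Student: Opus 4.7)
The plan is to identify the Fourier transform of $\pi$ as a pointwise limit of $e^{\phi(t,\mathrm{i}u)}$, transfer the decay estimate of Theorem \ref{thm: estimate characteristic function} to that limit, and then conclude by Fourier inversion combined with differentiation under the integral sign. Under the stated hypotheses, Theorem~2.7 of \cite{2018arXiv181205402J} provides the unique invariant measure $\pi$ together with the weak convergence $P_t(x,\cdot) \to \pi$ as $t \to \infty$ for every $x \in D$. Taking in particular $x = 0 \in D$ in the affine formula \eqref{eq:affine property} yields the pointwise identity
\[
\widehat{\pi}(u) = \lim_{t \to \infty} e^{\phi(t,\mathrm{i}u)}, \qquad u \in \mathbb{R}^d.
\]
Fix now $t_{0} > 0$. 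By Theorem \ref{thm: estimate characteristic function}, for every $\vartheta > 0$ there exist constants such that
\[
\bigl|e^{\phi(t,\mathrm{i}u)}\bigr| \leq C_{t_0,\vartheta}(1+\|u_I\|)^{-\lambda(\vartheta)}\, e^{-\delta_{t_0,\vartheta}\|u_J\|^2}, \qquad t \geq t_0,\ \|u\| \geq M_{t_0,\vartheta};
\]
passing to the limit in $t$, the same bound persists for $|\widehat{\pi}(u)|$. Exactly as in the proof of part (b) of Theorem \ref{thm:existence of densities}, the boundary condition \eqref{eq: condition boundary} together with the continuity of $\vartheta \mapsto \widehat{\alpha}_{i,ii}(\vartheta)$ at $0^+$ allows one to choose $\vartheta > 0$ so small that $\lambda(\vartheta) > m + p$.

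With this estimate in hand, Fourier inversion produces a density $f^\pi$ of $\pi$ via
\[
f^\pi(y) = \int_{\mathbb{R}^d} e^{-\langle y,\mathrm{i}u\rangle}\,\widehat{\pi}(u)\,\frac{\mathrm{d}u}{(2\pi)^d},
\]
and for every multi-index $\mathbf{q} \in \mathbb{N}_0^d$ with $\sum_{i \in I} q_i \leq p$ the bound on $|\widehat{\pi}(u)|$, combined with $\lambda(\vartheta) - \sum_{i \in I} q_i > m$ and the Gaussian factor $e^{-\delta_{t_0,\vartheta}\|u_J\|^2}$ in the $u_J$-direction, shows that $\prod_{k=1}^d |u_k|^{q_k}\,|\widehat{\pi}(u)| \in L^1(\mathbb{R}^d)$. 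Differentiation under the integral is therefore justified and yields precisely the explicit formula stated in the lemma (with the understanding that $e^{\phi(t,\mathrm{i}u)}$ in the statement stands for its pointwise limit $\widehat{\pi}(u)$). Joint continuity of the derivatives is immediate from dominated convergence, and their decay at infinity follows from the Riemann--Lebesgue lemma, so that $f^\pi \in C_0^p(D)$.

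Essentially all of the technical work is already carried by Theorem \ref{thm: estimate characteristic function}, so I do not anticipate a genuine obstacle. The only steps requiring brief care are the identification $\widehat{\pi}(u) = \lim_{t \to \infty} e^{\phi(t,\mathrm{i}u)}$ and the transfer of the estimate to that limit; both are immediate consequences of the weak convergence and of the $t$-uniformity of the bound for $t \geq t_0$. The subsequent Fourier-analytic arguments are entirely routine given the decay of $\widehat{\pi}$.
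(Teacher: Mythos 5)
Your proof is correct and follows essentially the same route as the paper: pass the decay estimate of Theorem \ref{thm: estimate characteristic function} through the pointwise limit $\widehat{\pi}(u)=\lim_{t\to\infty}\int_D e^{\mathrm{i}\langle u,\xi\rangle}P_t(x,\mathrm{d}\xi)$ supplied by \cite[Theorem 2.7]{2018arXiv181205402J}, choose $\vartheta$ small so that $\lambda(\vartheta)>m+p$, and then invoke Fourier inversion with differentiation under the integral. You also correctly flag that the $e^{\phi(t,\mathrm{i}u)}$ appearing in the displayed formula should be read as its $t\to\infty$ limit $\widehat{\pi}(u)$, which the paper leaves implicit.
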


\begin{proof}
Similarly as in the proof of Theorem \ref{thm:existence of densities},
we first find small enough $\vartheta\in(0,1)$ such that
\begin{equation}
m+p<\lambda(\vartheta)=\min_{i\in I}b_{i}/\widehat{\alpha}_{i,ii}(\vartheta),\label{eq: defi alpha hat-1}
\end{equation}
where $\widehat{\alpha}_{i,ii}(\vartheta)$ is as in \eqref{eq: defi alpha hat-2},
and then apply Theorem \ref{thm: estimate characteristic function}
to get
\[
\mathrm{e}^{\mathrm{Re}(\phi(t,\mathrm{i}u))}\le C(1+\|u_{I}\|)^{-\lambda(\vartheta)}\mathrm{e}^{-\delta\|u_{J}\|^{2}},\quad\|u\|\ge M,\ t\ge1,
\]
where $C,\thinspace\delta,\thinspace M>0$ are constants depending
on $\vartheta$. So
\begin{align*}
\left|\int_{D}\mathrm{e}^{\langle\mathrm{i}u,\xi\rangle}P_{t}(x,\mathrm{d}\xi)\right| & \leq C(1+\|u_{I}\|)^{-\lambda(\vartheta)}\mathrm{e}^{-\delta\|u_{J}\|^{2}}
\end{align*}
holds for all $x\in D$, $u\in\mathbb{R}^{d}$ with $\|u\|\geq M$,
and $t\geq1$. Following \cite[Theorem 2.7]{2018arXiv181205402J}
it holds
\[
\int_{D}\mathrm{e}^{\langle\mathrm{i}u,\xi\rangle}\pi(\mathrm{d}\xi)=\lim_{t\to\infty}\int_{D}\mathrm{e}^{\langle\mathrm{i}u,\xi\rangle}P_{t}(x,\mathrm{d}\xi),\qquad u\in\mathbb{R}^{d},\ \ x\in D,
\]
and hence we obtain, for $u\in\mathbb{R}^{d}$ with $\|u\|\geq M$,
\[
\left|\int_{D}\mathrm{e}^{\langle\mathrm{i}u,\xi\rangle}\pi(\mathrm{d}\xi)\right|\leq C(1+\|u_{I}\|)^{-\lambda(\vartheta)}\mathrm{e}^{-\delta\|u_{J}\|^{2}}.
\]
In view of \eqref{eq: defi alpha hat-1}, the existence and differentiability
of the density $f^{\pi}$ follows by classical properties of the Fourier
transform.
\end{proof}

\section{Exponential ergodicity in the total variation norm}
\label{sec:exp ergodicity}

In this section we prove the exponential ergodicity
statement in Theorem \ref{thm:exp ergo}. For this purpose, we first
prove a similar result for the affine process with transition semigroup
$(Q_{t})_{t\geq0}$ and admissible parameters $(a,\alpha,b,\beta,\nu=0,\mu)$,
i.e.,
\begin{equation}
\int_{D}\mathrm{e}^{\langle u,\xi\rangle}Q_{t}(x,\mathrm{d}\xi)=\exp\left(\int_{0}^{t}\langle\psi(s,u),a\psi(s,u)\rangle\mathrm{d}s+\langle b,\int_{0}^{t}\psi(s,u)\mathrm{d}s\rangle+\langle x,\psi(t,u)\rangle\right)\label{eq:definition of Q_t}
\end{equation}
for all $u\in\mathcal{U}$, where $\psi(t,u)$ is obtained from \eqref{eq:riccati eq for psi}.
The assertion is then deduced by a convolution argument similar to
\cite{2019arXiv190105815F,2018arXiv181205402J}. According to Theorem
\ref{thm:characterization of affine processes}, the generator $\mathcal{A}_{Q}$
of $(Q_{t})_{t\geq0}$ is given by
\begin{equation}
\mathcal{A}_{Q}f(x)=\mathcal{A}_{Q}^{0}f(x)+\mathcal{A}_{Q}^{1}f(x),\label{eq:generator of Q}
\end{equation}
for all $x\in D$ and defined for every $f\in C_{c}^{2}(D)$, where
\begin{align}
\mathcal{A}_{Q}^{0}f(x) & =\langle b,\nabla f(x)\rangle+\sum_{k,l=1}^{d}a_{kl}\frac{\partial^{2}f(x)}{\partial x_{k}\partial x_{l}},\nonumber\\
\mathcal{A}_{Q}^{1}f(x) & =\langle\beta x,\nabla f(x)\rangle+\sum_{k,l=1}^{d}\left(\sum_{i=1}^{m}\alpha_{i,kl}x_{i}\right)\frac{\partial^{2}f(x)}{\partial x_{k}\partial x_{l}}\nonumber \\
 & \ \ \ +\sum_{i=1}^{m}x_{i}\int_{D\backslash\lbrace0\rbrace}\left(f(x+\xi)-f(x)-\langle\xi,\nabla f(x)\rangle\right)\mu_{i}(\mathrm{d}\xi).\nonumber
\end{align}
Proceeding as in \cite[p.~10]{2018arXiv181205402J}, for $\beta$
whose eigenvalues have strictly negative real-parts, we define the
following norms
\[
\Vert x_{I}\Vert_{I}:=\sqrt{\langle x_{I},x_{I}\rangle_{I}}=\sqrt{\langle x_{I},M_{I}x_{I}\rangle}\quad\text{and}\quad\Vert x_{J}\Vert_{J}:=\sqrt{\langle x_{J},x_{J}\rangle_{J}}=\sqrt{\langle x_{J},M_{J}x_{J}\rangle},
\]
where
\[
M_{I}:=\int_{0}^{\infty}\mathrm{e}^{t\beta_{II}^{\top}}\mathrm{e}^{t\beta_{II}}\mathrm{d}t\quad\text{and}\quad M_{J}:=\int_{0}^{\infty}\mathrm{e}^{t\beta_{JJ}^{\top}}\mathrm{e}^{t\beta_{JJ}}\mathrm{d}t.
\]
Since $\beta_{II}$ and $\beta_{JJ}$ have only eigenvalues with strictly
negative real-parts, the matrices $M_{I}$ and $M_{J}$ are well-defined.
Note that both $M_{I}$ and $M_{J}$ are symmetric positive definite
matrices.

\begin{prop}\label{prop:lyapunov function} Assume $d=m+n\geq1$
and let $(Q_{t})_{t\geq0}$ be the affine semigroup given by \eqref{eq:definition of Q_t}
with admissible parameters $(a,\alpha,b,\beta,\nu=0,\mu)$ such that
$\beta$ has only eigenvalues with strictly negative real-parts. Let
$V_{\varepsilon}\in C^{2}(D)$ be given by
\[
V_{\varepsilon}(x)=\left(1+\langle x_{I},x_{I}\rangle_{I}+\varepsilon\langle x_{J},x_{J}\rangle_{J}\right)^{1/2},\quad x\in\mathbb{R}_{\geq0}^{m}\times\mathbb{R}^{n},
\]
where $\varepsilon>0$. Then $V_{\varepsilon}$ belongs to the domain
of the extended generator $\mathcal{A}_{Q}$. If $\varepsilon>0$
is small enough, then there exist positive constants $c$ and $C$
such that
\begin{equation}
\mathcal{A}_{Q}V_{\varepsilon}(x)\leq -cV_{\varepsilon}(x)+C,\quad\text{for all }x\in D.\label{eq:lyapunov estimate}
\end{equation}
\end{prop}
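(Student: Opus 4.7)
The plan is to compute $\mathcal{A}_Q V_\varepsilon(x)$ term-by-term from the decomposition \eqref{eq:generator of Q} and dominate the resulting expression by $-cV_\varepsilon(x)+C$, exploiting the block structure of the coefficients together with the Lyapunov identities
\begin{equation*}
\beta_{II}^\top M_I+M_I\beta_{II}=-\mathrm{Id}_m,\qquad \beta_{JJ}^\top M_J+M_J\beta_{JJ}=-\mathrm{Id}_n,
\end{equation*}
which follow by integrating $\frac{\mathrm{d}}{\mathrm{d}t}(\mathrm{e}^{t\beta_{II}^\top}\mathrm{e}^{t\beta_{II}})$ from $0$ to $\infty$ and using that the eigenvalues of $\beta_{II}$, $\beta_{JJ}$ have strictly negative real parts. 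Direct differentiation gives $\nabla V_\varepsilon(x)=V_\varepsilon(x)^{-1}(M_Ix_I,\varepsilon M_Jx_J)$ and the identity $\nabla^2V_\varepsilon=V_\varepsilon^{-1}(\widetilde{M}-(\nabla V_\varepsilon)(\nabla V_\varepsilon)^\top)$ with $\widetilde{M}=\mathrm{diag}(M_I,\varepsilon M_J)$, from which $|\nabla V_\varepsilon|\leq C$ and $|\nabla^2 V_\varepsilon(x)|\leq C/V_\varepsilon(x)$ follow uniformly in $x$. That $V_\varepsilon$ lies in the extended domain of $\mathcal{A}_Q$ will follow by a standard localization argument using the finiteness of polynomial moments of $(Q_t)_{t\geq 0}$, which is guaranteed by the first-moment condition on $\mu_i$.

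For the drift part, the assumption $\beta_{IJ}=0$ allows the decomposition
\begin{equation*}
\langle\beta x,\nabla V_\varepsilon(x)\rangle=V_\varepsilon(x)^{-1}\bigl[\langle\beta_{II}x_I,M_Ix_I\rangle+\varepsilon\langle\beta_{JI}x_I+\beta_{JJ}x_J,M_Jx_J\rangle\bigr].
\end{equation*}
The Lyapunov identities give $\langle\beta_{II}x_I,M_Ix_I\rangle=-\tfrac{1}{2}\|x_I\|^2$ and $\langle\beta_{JJ}x_J,M_Jx_J\rangle=-\tfrac{1}{2}\|x_J\|^2$, while Young's inequality absorbs the cross term into $\tfrac{\varepsilon}{4}\|x_J\|^2+C_\beta\varepsilon\|x_I\|^2$. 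Fixing $\varepsilon$ so small that $C_\beta\varepsilon<\tfrac{1}{4}$, the drift contributes at most $-c_1(\|x_I\|^2+\varepsilon\|x_J\|^2)/V_\varepsilon(x)+C/V_\varepsilon(x)$; combined with the elementary estimate $\|x_I\|^2+\varepsilon\|x_J\|^2\geq c_0(V_\varepsilon(x)^2-1)$, this yields a net contribution $\leq-c_2V_\varepsilon(x)+C_2$. The two diffusion terms $\sum_{kl}a_{kl}\partial_k\partial_lV_\varepsilon$ and $\sum_{kl,i}\alpha_{i,kl}x_i\partial_k\partial_lV_\varepsilon$ are bounded uniformly in $x$, since $|\nabla^2 V_\varepsilon|\leq C/V_\varepsilon$ and $|x_i|\leq V_\varepsilon/\sqrt{c_I}$ where $c_I>0$ is the smallest eigenvalue of $M_I$.

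The main obstacle is the state-dependent jump term $\sum_ix_i\int[V_\varepsilon(x+\xi)-V_\varepsilon(x)-\langle\xi,\nabla V_\varepsilon(x)\rangle]\mu_i(\mathrm{d}\xi)$, since the naive Lipschitz bound only yields $\leq C\|x_I\|$, which is of order $V_\varepsilon$ and could overwhelm the negative drift. My strategy is to split the inner $\xi$-integral at an auxiliary radius $R>0$: on $\{\|\xi\|\leq R\}$, the second-order Taylor bound combined with $|\nabla^2V_\varepsilon|\leq C/V_\varepsilon$ gives integrand $\leq C\|\xi\|^2/V_\varepsilon(x)$, so this part contributes at most $C_Rx_i/V_\varepsilon(x)$ in view of $\int_{\{\|\xi\|\leq R\}}\|\xi\|^2\mu_i(\mathrm{d}\xi)<\infty$; on $\{\|\xi\|>R\}$ the Lipschitz estimate gives integrand $\leq C\|\xi\|$, and the admissibility condition yields $\delta_R:=\max_i\int_{\{\|\xi\|>R\}}\|\xi\|\mu_i(\mathrm{d}\xi)\to 0$ as $R\to\infty$. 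Using $x_i\leq V_\varepsilon/\sqrt{c_I}$ once more, the total jump contribution is bounded by $C'_R+C\delta_RV_\varepsilon(x)$. Assembling all the pieces gives $\mathcal{A}_QV_\varepsilon(x)\leq-(c_2-C\delta_R)V_\varepsilon(x)+C$, so first fixing $\varepsilon>0$ small enough to control the drift cross term and then choosing $R$ large enough that $C\delta_R<c_2/2$ delivers \eqref{eq:lyapunov estimate} with $c=c_2/2>0$.
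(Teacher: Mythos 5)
Your proposal is correct in substance and arrives at the same Lyapunov estimate, but the route is genuinely more self-contained: the paper's proof is essentially a citation argument, decomposing $\mathcal{A}_Q=\mathcal{A}_Q^0+\mathcal{A}_Q^1$, observing $|\mathcal{A}_Q^0V_\varepsilon|\leq c_1$, and then invoking \cite[Lemma~3.4, Proposition~3.7]{2018arXiv181205402J} for the negative-drift estimate on $\mathcal{A}_Q^1V_\varepsilon$ (and \cite[Proposition~5.1(a)]{2019arXiv190105815F} for the extended-domain claim), whereas you re-derive that inequality directly. Your computation has all the right ingredients: the Lyapunov identities for $M_I,M_J$, the block structure $\beta_{IJ}=0$, Young's inequality to absorb the $\beta_{JI}$ cross-term by choosing $\varepsilon$ small, the uniform bounds $|\nabla V_\varepsilon|\leq C$ and $|\nabla^2V_\varepsilon|\leq C/V_\varepsilon$, the linear bound $|x_i|\lesssim V_\varepsilon(x)$ from positive-definiteness of $M_I$, and the crucial split of the jump integral at radius $R$ so that the large-jump tail contributes only $O(\delta_R V_\varepsilon)$ with $\delta_R\to0$. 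Fixing $\varepsilon$ first and then $R$ is the right order.

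One small technical imprecision worth flagging: you claim the second-order Taylor remainder is $\leq C\|\xi\|^2/V_\varepsilon(x)$ by appealing to $|\nabla^2V_\varepsilon|\leq C/V_\varepsilon$, but the Hessian in the Taylor formula is evaluated at the intermediate point $x+s\xi$, and $V_\varepsilon(x+s\xi)$ is not pointwise comparable to $V_\varepsilon(x)$ when $\|\xi\|$ is comparable to $V_\varepsilon(x)$. The cleanest fix avoids Taylor altogether: writing $V_\varepsilon=\sqrt{1+\langle x,\widetilde{M}x\rangle}$ and using
\begin{align*}
V_\varepsilon(x+\xi)-V_\varepsilon(x)-\langle\xi,\nabla V_\varepsilon(x)\rangle
&=\langle\widetilde{M}x,\xi\rangle\,\frac{V_\varepsilon(x)-V_\varepsilon(x+\xi)}{V_\varepsilon(x)\bigl(V_\varepsilon(x)+V_\varepsilon(x+\xi)\bigr)}+\frac{\langle\xi,\widetilde{M}\xi\rangle}{V_\varepsilon(x)+V_\varepsilon(x+\xi)},
\end{align*}
together with $|\langle\widetilde{M}x,\xi\rangle|\leq\|\widetilde{M}^{1/2}\|\,V_\varepsilon(x)\|\xi\|$, the global Lipschitz bound $|V_\varepsilon(x)-V_\varepsilon(x+\xi)|\leq L\|\xi\|$, and $V_\varepsilon(x)+V_\varepsilon(x+\xi)\geq V_\varepsilon(x)$, gives exactly $\leq C\|\xi\|^2/V_\varepsilon(x)$ with no restriction on $x$ or $\xi$. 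With that patch (or, alternatively, by treating the bounded set $\{V_\varepsilon(x)\leq 2LR\}$ separately), the argument goes through as you describe.
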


\begin{proof} Following the arguments given in the proof of \cite[Proposition 5.1 (a)]{2019arXiv190105815F},
we conclude that $V_{\varepsilon}$ belongs to the domain of the extended
generator and that $\mathcal{A}_{Q}V_{\varepsilon}$ is given as in
\eqref{eq:generator of Q}. It remains to prove \eqref{eq:lyapunov estimate}.
Now, it follows from the particular form of $V_{\varepsilon}$ that
there exists a constant $c_{1}>0$ such that $|\mathcal{A}_{Q}^{0}V_{\varepsilon}(x)|\leq c_{1}$
for all $x\in D$. Next, note that there exist constants $c_{2},\thinspace c_{3}>0$
and sufficiently small $\varepsilon\in(0,1/2)$ such that
\begin{align}
\mathcal{A}_{Q}^{1}V_{\varepsilon}(x)\leq c_{2}-c_{3}(\|x_{I}\|_{I}^{2}+\varepsilon\|x_{J}\|_{J}^{2})^{1/2},\qquad\|x\|>2.\label{eq: lyapunov estimate}
\end{align}
Indeed, if $m\geq1$ and $n\geq1$, then this follows from the inequalities
shown in the proof of \cite[Lemma 3.4]{2018arXiv181205402J}; the
case $m\geq1$, $n=0$ follows by similar arguments to \cite[Proposition 3.7]{2018arXiv181205402J};
while the case $m=0$, $n\geq1$ describes a L\'evy driven OU-process
and can be shown by similar (but essentially simpler) arguments to
the previous two cases. Using \eqref{eq: lyapunov estimate} combined
with $|\mathcal{A}_{Q}^{1}V_{\varepsilon}(x)|\leq c_{4}$, $x\in D$,
where $c_{4}>0$ is some constant, we can easily show that
\begin{align*}
\mathcal{A}_{Q}^{1}V_{\varepsilon}(x) & \leq c_{5}-c_{6}\sqrt{\varepsilon}V_{\varepsilon}(x)
\end{align*}
where $c_{5},\thinspace c_{6}>0$ are some constants. Combining both estimates
for $\mathcal{A}_{Q}^{0}V_{\varepsilon}(x)$ and $\mathcal{A}_{Q}^{1}V_{\varepsilon}(x)$
readily yields \eqref{eq:lyapunov estimate}. \end{proof}

From the
Lyapunov estimate we shall deduce a contraction estimate in total
variation distance for the transition semigroup $(Q_{t})_{t\geq0}$.
\begin{prop}\label{prop:strong feller property} Let $(Q_{t})_{t\geq0}$
be the affine semigroup given by \eqref{eq:definition of Q_t} with
admissible parameters $(a,\alpha,b,\beta,\nu=0,\mu)$ such that $\mathcal{K}$
has full rank, $\min_{i\in\{1,\dots,m\}}\alpha_{i,ii}>0$, and $\beta$
has only eigenvalues with strictly negative real-parts. Further, suppose
that $m<\min_{i\in I}b_{i}\alpha_{i,ii}^{-1}$. Then for every $M>0$
there exists $h>0$ and $\delta\in(0,2)$ such that
\[
\left\Vert Q_{h}(x,\cdot)-Q_{h}(y,\cdot)\right\Vert _{TV}\leq2-\delta,\quad\text{for all }x,\thinspace y\in D\text{ with }\Vert x\Vert,\thinspace\Vert y\Vert\leq M.
\]
\end{prop}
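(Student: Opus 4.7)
The plan is to apply Theorem~\ref{thm:existence of densities} to the auxiliary affine semigroup $(Q_{t})_{t\geq0}$ with admissible parameters $(a,\alpha,b,\beta,0,\mu)$ and then to combine the resulting regularity of the transition density with the Lyapunov estimate from Proposition~\ref{prop:lyapunov function} in a Doeblin-type minorization. The hypothesis $m<\min_{i\in I}b_{i}/\alpha_{i,ii}$ is exactly condition \eqref{eq: condition boundary} with $p=0$, and $\mathcal{K}$ has full rank and $\min_{i}\alpha_{i,ii}>0$ by assumption. Theorem~\ref{thm:existence of densities}(a) therefore yields a transition density $f_{h}^{Q}(x,\cdot)\in C_{0}(D)$ of $Q_{h}(x,\cdot)$ that is jointly continuous in $(h,x,y)\in(0,\infty)\times D\times D$, and the map $x\mapsto f_{h}^{Q}(x,\cdot)\in L^{1}(D)$ is continuous for every fixed $h>0$.

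Writing $\Delta(x,y):=\int_{D}f_{h}^{Q}(x,z)\wedge f_{h}^{Q}(y,z)\,\mathrm{d}z$ for the overlap of the two densities, the elementary identity $|f-g|=f+g-2(f\wedge g)$ converts the desired bound into a uniform lower bound of the form $\Delta(x,y)\geq\delta/2>0$ on the compact set $(\{x\in D:\|x\|\leq M\})^{2}$. The inequality $|\min(a,b)-\min(a',b')|\leq|a-a'|+|b-b'|$, applied pointwise in $z$ and integrated, shows that $\Delta$ is continuous on $D\times D$, so by compactness the infimum is attained; the task therefore reduces to producing \emph{some} $h>0$ for which $\Delta(x,y)>0$ pointwise on this set.

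To accomplish this I split $h=h_{1}+h_{2}$ and carry out a two-step minorization. For the first step, Proposition~\ref{prop:lyapunov function} together with Dynkin's formula gives $\mathbb{E}^{x}[V_{\varepsilon}(X_{h_{1}})]\leq V_{\varepsilon}(x)+C/c$, so Markov's inequality furnishes a compact sublevel set $K=\{V_{\varepsilon}\leq R\}\subset D$ with $Q_{h_{1}}(x,K)\geq 1/2$ uniformly in $\|x\|\leq M$ once $R$ is large. For the second step, for every fixed $\bar w\in D$ the nonnegative continuous function $f_{h_{2}}^{Q}(\bar w,\cdot)$ has total integral one, hence is positive on some open ball $V\subset D^{\mathrm{o}}$; joint continuity in $(w,z)$ should then promote this to a uniform bound $f_{h_{2}}^{Q}(w,z)\geq\alpha>0$ on $K\times V$. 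Chapman--Kolmogorov would then give
\[
f_{h}^{Q}(x,z)=\int_{D}Q_{h_{1}}(x,\mathrm{d}w)\,f_{h_{2}}^{Q}(w,z)\geq\tfrac{\alpha}{2}\,\mathbbm{1}_{V}(z)
\]
for all $\|x\|\leq M$, and analogously for $y$, whence $\Delta(x,y)\geq\tfrac{\alpha}{2}\,\mathrm{Leb}(V)>0$ uniformly, closing the argument.

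The main obstacle is the pointwise-to-uniform step just outlined: positivity of $f_{h_{2}}^{Q}(\bar w,\cdot)$ on an open ball $V$ is only a statement at a single starting point, and to get a uniform lower bound $f_{h_{2}}^{Q}(w,z)\geq\alpha>0$ valid for every $w\in K$ one has to show that the common reachable region of the process in time $h_{2}$, starting from any point of $K$, has non-empty intersection with some fixed open set $V\subset D^{\mathrm{o}}$. This is an irreducibility/support statement for $(Q_{t})_{t\geq0}$, expected from $\mathcal{K}$ having full rank and $\min_{i}\alpha_{i,ii}>0$, and the most flexible way to justify it is probably to cover $K$ by finitely many neighborhoods on which joint continuity of $f_{h_{2}}^{Q}$ yields local lower bounds, and then to patch these with one additional application of the Chapman--Kolmogorov/Lyapunov step to a larger compact set in $D^{\mathrm{o}}$ until a single open set $V$ works uniformly.
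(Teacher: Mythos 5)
The paper's own ``proof'' of this proposition is a one-line deferral to \cite[Proposition 5.3, part (ii)]{2019arXiv190805473F}, so there is no in-text argument to compare against literally. Your outline nevertheless traces the standard route: invoke Theorem~\ref{thm:existence of densities} for $(Q_t)$ (with $p=0$, $\nu=0$), translate the Dobrushin condition into a uniform lower bound on the overlap $\Delta(x,y)=\int f_h^Q(x,z)\wedge f_h^Q(y,z)\,\mathrm{d}z$, and note that continuity of $\Delta$ on $D\times D$ plus compactness of $(\{x\in D:\|x\|\le M\})^2$ reduces the problem to pointwise positivity of $\Delta$. Those reductions are correct (and the identification of $m<\min_i b_i/\alpha_{i,ii}$ as \eqref{eq: condition boundary} with $p=0$ is exactly right). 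One minor inefficiency: the Lyapunov/Dynkin step producing the sublevel set $K=\{V_\varepsilon\le R\}$ is not needed here, since the set $\{x\in D:\|x\|\le M\}$ over which uniformity is required is already compact; the Lyapunov estimate is needed for the Harris-type argument in the proof of Theorem~\ref{thm:exp ergo}, not for the local Dobrushin condition by itself.

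The genuine gap is the one you flagged yourself, and it is not resolved by the ``patching'' you sketch. You need: there exist $h>0$, a point $z_0\in D^{\mathrm{o}}$ and $\alpha>0$ such that $f_h^Q(w,z_0)\ge\alpha$ for every $w$ in a compact set containing $\{\|x\|\le M\}\cap D$. Once you have positivity of $f_h^Q(\cdot,z_0)$ at a \emph{single} $z_0$ simultaneously for all such $w$, the joint continuity from Theorem~\ref{thm:existence of densities}(a) plus the tube lemma on the compact set $K\times\{z_0\}$ does yield a uniform lower bound on a ball $V\ni z_0$, and your Chapman--Kolmogorov conclusion is then fine. But positivity of $f_h^Q(\bar w,\cdot)$ on \emph{some} ball $V_{\bar w}$ depending on $\bar w$ (which is all that ``density integrates to one and is continuous'' buys) does not imply a common target $z_0$: the open sets $V_{\bar w}$ produced for different $\bar w$ may be pairwise disjoint, and an iterated Chapman--Kolmogorov step only re-expresses the same problem for a new compact set, with no guarantee the iteration terminates. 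What is really needed is a support/irreducibility statement for $(Q_t)$, e.g.\ that $f_t^Q(x,z)>0$ for every $t>0$, $x\in D$ and $z\in D^{\mathrm{o}}$. This is plausible under $\min_i\alpha_{i,ii}>0$, full rank of $\mathcal K$ and the boundary non-attainment condition, but it is exactly the substantive content that \cite[Proposition 5.3 (ii)]{2019arXiv190805473F} is being cited to supply, and your proposal does not supply it. As written the argument is therefore incomplete at its central step.
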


The proof of Proposition \ref{prop:strong feller property} goes along
the very same lines as in the proof of \cite[Proposition 5.3]{2019arXiv190805473F},
see part (ii) therein. We omit the details here. We are ready to proof
our second main result.

\begin{proof}[Proof of Theorem \ref{thm:exp ergo}] Denote by $(Q_{t})_{t\geq0}$
the transition semigroup given by \eqref{eq:definition of Q_t} and
let $(R_{t})_{t\geq0}$ be the transition semigroup given by
\begin{align*}
 & \int_{D}\mathrm{e}^{\langle u,\xi\rangle}R_{t}(x,\mathrm{d}\xi)\nonumber \\
 & \quad=\exp\left(\int_{0}^{t}\int_{D\backslash\{0\}}\left(\mathrm{e}^{\langle\psi(s,u),\xi\rangle}-1-\langle\psi_{J}(s,u),\xi_{J}\rangle\mathbbm{1}_{\lbrace\Vert\xi\Vert\leq1\rbrace}(\xi)\right)\nu(\mathrm{d}\xi)\mathrm{d}s+\langle x,\psi(t,u)\rangle\right),
\end{align*}
for all $u\in\mathcal{U}$, where $\psi(t,u)$ is given in \eqref{eq:riccati eq for psi}.
Then $R_{t}(x,\mathrm{d}\xi)$ has admissible parameters $(a=0,\alpha,b=0,\beta,\nu,\mu)$.
Using \eqref{eq:affine property}, we obtain
\[
\int_{D}\mathrm{e}^{\langle u,\xi\rangle}P_{t}(x,\mathrm{d}\xi)=\int_{D}\mathrm{e}^{\langle u,\xi\rangle}Q_{t}(x,\mathrm{d}\xi)\cdot\int_{D}\mathrm{e}^{\langle u,\xi\rangle}R_{t}(0,\mathrm{d}\xi),\quad t\geq0,\thinspace u\in\mathcal{U},
\]
implying that $P_{t}(x,\cdot)=Q_{t}(x,\cdot)\ast R_{t}(0,\cdot)$
for all $t\geq0$ and $x\in D$, where `$\ast$' denotes the usual
convolution between measures. As a consequence of \cite[Theorem 4.1]{Hairer10convergenceof}
(see also \cite[Corollary 2.8.3 and Theorem 3.2.3]{MR3791835}), combining
Propositions \ref{prop:lyapunov function} and \ref{prop:strong feller property},
yields
\begin{equation}
\left\Vert Q_{t}(x,\cdot)-Q_{t}(y,\cdot)\right\Vert _{TV}\leq c\min\left\lbrace 1,\mathrm{e}^{-ct}\left(1+\Vert x\Vert+\Vert y\Vert\right)\right\rbrace ,\quad t\geq0,\thinspace x\in D,\label{eq:key estimate for exp ergodicity}
\end{equation}
with some constant $c>0$. In the following we let $H$ be any coupling\footnote{A \emph{coupling} $H$ of two Borel probability measures $(\varrho,\widetilde{\varrho})$
on $D$ is a again Borel probability measure on $D\times D$ which
has marginals $\varrho$ and $\widetilde{\varrho}$, that is, for
bounded Borel measurable functions $f$ and $g$ on $D$ it holds
\[
\int_{D\times D}\left(f(x)+g(\widetilde{x})\right)H(\mathrm{d}x,\mathrm{d}\widetilde{x})=\int_{D}f(x)\varrho(\mathrm{d}x)+\int_{D}g(x)\widetilde{\varrho}(\mathrm{d}x).
\]
} of the Dirac measure $\delta_{x}$ concentrated in $x$ and the invariant
distribution $\pi$. Let $C>0$ be a generic constant that may vary
from line to line. From \cite[Theorem 4.8]{MR2459454} combined with
the invariance of $\pi$ and an obvious extension of \cite[Lemma 2.3]{2019arXiv190202833F},
we get
\begin{align*}
\Vert P_{t}(x,\cdot)-\pi(\cdot)\Vert_{TV} & \leq\int_{D\times D}\left\Vert P_{t}(x,\cdot)-P_{t}(y,\cdot)\right\Vert _{TV}H(\mathrm{d}x,\mathrm{d}y)\\
 & \leq\int_{D\times D}\Vert Q_{t}(x,\cdot)-Q_{t}(y,\cdot)\Vert_{TV}H(\mathrm{d}x,\mathrm{d}y)\\
 & \leq C\int_{D\times D}\left(1\wedge\mathrm{e}^{-ct}\left(1+\Vert x\Vert+\Vert y\Vert\right)\right)H(\mathrm{d}x,\mathrm{d}y)\\
 & \leq C\int_{D\times D}\log\left(1+\mathrm{e}^{-ct}\left(1+\Vert x\Vert+\Vert y\Vert\right)\right)H(\mathrm{d}x,\mathrm{d}y),
\end{align*}
where the last two inequalities follow from \eqref{eq:key estimate for exp ergodicity}
and the trivial inequality $1\wedge c_{1}\leq\log(2)^{-1}\log(1+c_{1})$
for all $c_{1}>0$. In \cite[Lemma 8.5]{2019arXiv190105815F} the
following inequality was shown
\[
\log(1+c_{2}\cdot c_{3})\leq c\min\left\lbrace c_{2},\log(1+c_{3})\right\rbrace +c\log(1+c_{3}),
\]
for any $c_{2},\thinspace c_{3}\geq0$, where $c>0$ is a constant.
So
\begin{align*}
\Vert P_{t}(x,\cdot)-\pi(\cdot)\Vert_{TV} & \leq C\int_{D\times D}\min\left\lbrace \mathrm{e}^{-ct},\log\left(1+\Vert x\Vert+\Vert y\Vert\right)\right\rbrace H(\mathrm{d}x,\mathrm{d}y)\\
 & \quad+C\mathrm{e}^{-ct}\int_{D\times D}\log\left(1+\Vert x\Vert+\Vert y\Vert\right)H(\mathrm{d}x,\mathrm{d}y)\\
 & \leq C\min\left\lbrace \mathrm{e}^{-ct},\int_{D\times D}\log\left(1+\Vert x\Vert+\Vert y\Vert\right)H(\mathrm{d}x,\mathrm{d}y)\right\rbrace \\
 & \quad+C\mathrm{e}^{-ct}\int_{D\times D}\log\left(1+\Vert x\Vert+\Vert y\Vert\right)H(\mathrm{d}x,\mathrm{d}y)\\
 & \leq C\mathrm{e}^{-ct}\left(1+\int_{D\times D}\log\left(1+\Vert x\Vert+\Vert y\Vert\right)H(\mathrm{d}x,\mathrm{d}y)\right).
\end{align*}
Taking $H$ as the optimal coupling of $(\delta_{x},\pi)$ and using
the subadditivity of $\log$, we see that
\[
\Vert P_{t}(x,\cdot)-\pi(\cdot)\Vert_{TV}\leq C\mathrm{e}^{-ct}\left(1+\log\left(1+\Vert x\Vert\right)+\int_{D}\log\left(1+\Vert y\Vert\right)\pi(\mathrm{d}y)\right).
\]
Notice that the integral on the right-hand side is indeed finite by
\cite[Theorem 1.5]{2019arXiv190105815F}. \end{proof}

\vspace{0.5cm}

\textbf{Acknowledgements.} We would like to thank the insurance company
Debeka in Koblenz for their keen interest and partially financing
the research that was carried out in the present article. The authors
would also like to thank Eberhard Mayerhofer for helpful comments
on the topic of this work. The research of Peng Jin is supported by the National Natural Science Foundation of China (No. 11861029).

\bibliographystyle{amsplain}

\begin{thebibliography}{10}

\bibitem{MR3254346}
M\'{a}ty\'{a}s Barczy, Leif D\"{o}ring, Zenghu Li, and Gyula Pap,
  \emph{Stationarity and ergodicity for an affine two-factor model}, Adv. in
  Appl. Probab. \textbf{46} (2014), no.~3, 878--898. \MR{3254346}

\bibitem{MR3805842}
M.~Chazal, R.~Loeffen, and P.~Patie, \emph{Option pricing in a one-dimensional
  affine term structure model via spectral representations}, SIAM J. Financial
  Math. \textbf{9} (2018), no.~2, 634--664. \MR{3805842}

\bibitem{MR1994043}
D.~Duffie, D.~Filipovi{\'c}, and W.~Schachermayer, \emph{Affine processes and
  applications in finance}, Ann. Appl. Probab. \textbf{13} (2003), no.~3,
  984--1053.

\bibitem{MR3084047}
Damir Filipovi\'{c}, Eberhard Mayerhofer, and Paul Schneider, \emph{Density
  approximations for multivariate affine jump-diffusion processes}, J.
  Econometrics \textbf{176} (2013), no.~2, 93--111. \MR{3084047}

\bibitem{2019arXiv190805473F}
Martin {Friesen} and Peng {Jin}, \emph{{On the anisotropic stable JCIR
  process}}, arXiv:1908.05473 (2019).

\bibitem{2019arXiv190202833F}
Martin {Friesen}, Peng {Jin}, Jonas {Kremer}, and Barbara {R{\"u}diger},
  \emph{{Exponential ergodicity for stochastic equations of nonnegative
  processes with jumps}}, arXiv:1902.02833 (2019).

\bibitem{2019arXiv190105815F}
Martin {Friesen}, Peng {Jin}, and Barbara {R{\"u}diger}, \emph{{Stochastic
  equation and exponential ergodicity in Wasserstein distances for affine
  processes}}, Ann. Appl. Probab., to appear.

\bibitem{FJR18}
\bysame, \emph{Existence of densities for multi-type continuous-state branching
  processes with immigration}, Stochastic Processes and their Applications
  (2020), https://doi.org/10.1016/j.spa.2020.03.012.

\bibitem{Hairer10convergenceof}
Martin Hairer, \emph{Convergence of markov processes}, Lecture Notes, 2010.

\bibitem{MR3732190}
Peng Jin, Jonas Kremer, and Barbara R\"{u}diger, \emph{Exponential ergodicity
  of an affine two-factor model based on the {$\alpha$}-root process}, Adv. in
  Appl. Probab. \textbf{49} (2017), no.~4, 1144--1169. \MR{3732190}

\bibitem{JKR}
\bysame, \emph{Moments and ergodicity of the jump-diffusion {CIR} process},
  Stochastics \textbf{91} (2019), no.~7, 974--997. \MR{4018158}

\bibitem{2018arXiv181205402J}
\bysame, \emph{Existence of limiting distribution for affine processes}, J.
  Math. Anal. Appl. \textbf{486} (2020), no.~2, 123912.

\bibitem{MR3451177}
Peng Jin, Barbara R\"{u}diger, and Chiraz Trabelsi, \emph{Exponential
  ergodicity of the jump-diffusion {CIR} process}, Stochastics of environmental
  and financial economics---{C}entre of {A}dvanced {S}tudy, {O}slo, {N}orway,
  2014--2015, Springer Proc. Math. Stat., vol. 138, Springer, Cham, 2016,
  pp.~285--300. \MR{3451177}

\bibitem{MR3437080}
\bysame, \emph{Positive {H}arris recurrence and exponential ergodicity of the
  basic affine jump-diffusion}, Stoch. Anal. Appl. \textbf{34} (2016), no.~1,
  75--95. \MR{3437080}

\bibitem{MR3313754}
Martin Keller-Ressel and Eberhard Mayerhofer, \emph{Exponential moments of
  affine processes}, Ann. Appl. Probab. \textbf{25} (2015), no.~2, 714--752.
  \MR{3313754}

\bibitem{MR3791835}
Alexei Kulik, \emph{Ergodic behavior of {M}arkov processes}, De Gruyter Studies
  in Mathematics, vol.~67, De Gruyter, Berlin, 2018, With applications to limit
  theorems. \MR{3791835}

\bibitem{MR3343292}
Zenghu Li and Chunhua Ma, \emph{Asymptotic properties of estimators in a stable
  {C}ox-{I}ngersoll-{R}oss model}, Stochastic Process. Appl. \textbf{125}
  (2015), no.~8, 3196--3233. \MR{3343292}

\bibitem{MR2287102}
Hiroki Masuda, \emph{Ergodicity and exponential {$\beta$}-mixing bounds for
  multidimensional diffusions with jumps}, Stochastic Process. Appl.
  \textbf{117} (2007), no.~1, 35--56. \MR{2287102}

\bibitem{2018arXiv181110542M}
Eberhard Mayerhofer, Robert Stelzer, and Johanna Vestweber, \emph{Geometric
  ergodicity of affine processes on cones}, Stochastic Processes and their
  Applications \textbf{130} (2020), no.~7, 4141--4173.

\bibitem{MR3185174}
Ken-iti Sato, \emph{L\'{e}vy processes and infinitely divisible distributions},
  Cambridge Studies in Advanced Mathematics, vol.~68, Cambridge University
  Press, Cambridge, 2013, Translated from the 1990 Japanese original, Revised
  edition of the 1999 English translation. \MR{3185174}

\bibitem{MR2459454}
C\'edric Villani, \emph{Optimal transport}, Grundlehren der Mathematischen
  Wissenschaften [Fundamental Principles of Mathematical Sciences], vol. 338,
  Springer-Verlag, Berlin, 2009, Old and new. \MR{2459454}

\bibitem{MR3058984}
Jian Wang, \emph{On the exponential ergodicity of {L}\'{e}vy-driven
  {O}rnstein-{U}hlenbeck processes}, J. Appl. Probab. \textbf{49} (2012),
  no.~4, 990--1004. \MR{3058984}

\bibitem{2018arXiv181100122Z}
Xiaowei {Zhang} and Peter~W. {Glynn}, \emph{{Affine Jump-Diffusions: Stochastic
  Stability and Limit Theorems}}, arXiv e-prints (2018), arXiv:1811.00122.

\end{thebibliography}
\providecommand{\bysame}{\leavevmode\hbox to3em{\hrulefill}\thinspace}
\providecommand{\MR}{\relax\ifhmode\unskip\space\fi MR }
\providecommand{\MRhref}[2]{%
  \href{http://www.ams.org/mathscinet-getitem?mr=#1}{#2}
}
\providecommand{\href}[2]{#2}

\end{document}